\documentclass{amsart}
\usepackage{amsfonts}
\usepackage{amssymb}
\usepackage{amsmath}
\usepackage{amsthm}
\usepackage{amscd}
\usepackage{mathrsfs}
\usepackage{graphicx, color}
\usepackage{url}
\usepackage{tabularx}

\theoremstyle{plain}
\newtheorem{theorem}{Theorem}[section]

\theoremstyle{definition}

\newcommand{\As}[1]{
 {\mathrm{As}(#1)}
}

\newcommand{\Hom}[2]{
 {\mathrm{Hom}(#1, #2)}
}

\newcommand{\RotE}[1][]{
 {\mathrm{Rot}_{#1} \mathbb{E}^{2}}
}

\begin{document}

\title{Quandle homology and relative group homology}

\author{Ayumu Inoue}
\address{Department of Mathematics, Tsuda University, 2-1-1 Tsuda-machi, Kodaira-shi, Tokyo 187-8577, Japan}
\email{ayminoue@tsuda.ac.jp}

\subjclass[2020]{57K12, 55N35}
\keywords{quandle homology, relative group homology, Seifert (hyper)surface}

\begin{abstract}
We introduce a chain map from quandle homology to relative group homology, and construct several quandle cocycles through the chain map.
We also relate this chain map to triangulations of Seifert (hyper)surfaces of 1- and 2-dimensional links.
\end{abstract}

\maketitle

\section{Introduction}
\label{sec:introduction}

Quandles and their homology theories are closely related to knot theory.
Given a quandle, we can define a numerical invariant of an $n$-dimensional oriented link ($n = 1, 2$), called a coloring number.
Moreover, an $(n + 1)$-cocycle of the quandle gives rise to a refinement of the coloring number, called a cocycle invariant \cite{CJKLS2003}.
For example, Carter et al.\ \cite{CJKLS2003} used a cocycle invariant to show that the 2-twist-spun trefoil, a $2$-dimensional knot, is not invertible.
Satoh and Shima \cite{SS2004} proved, using a suitable cocycle invariant, that the triple point number of the 2-twist-spun trefoil is four.
Subsequently, several authors \cite{Ino2025, Sat2016, SS2005} applied similar methods to determine triple point numbers of specific 2-dimensional knots.
To the best of the author's knowledge, cocycle invariants currently provide the only known method for determining triple point numbers.

It is generally difficult to find explicit cocycles of a given quandle.
Therefore, it is useful to have methods for constructing cocycles of quandles.
For example, Kabaya and the author \cite{IK2014} introduced another homology theory of quandles, called simplicial quandle homology, together with a chain map from quandle homology to simplicial quandle homology.
Using this chain map, they constructed an explicit cocycle of a certain quandle and showed that the complex volume of a 1-dimensional link can be computed as (an element of) a cocycle invariant derived from the cocycle.
Subsequently, Kabaya \cite{Kab2012} established a method for constructing quandle cocycles from group cocycles via the chain map under a certain condition, and demonstrated that Mochizuki 3-cocycles of dihedral quandles \cite{Moc2003} can be reconstructed by this method.
Nosaka \cite{Nos2014} proposed another method for constructing quandle cocycles from group cocycles via the chain map under a different condition, and showed that Mochizuki 3-cocycles of Alexander quandles \cite{Moc2005} can also be reconstructed in this way.
From a completely different perspective, Bae, Carter and Kim \cite{BCK2021} also developed a method for constructing quandle 2-cocycles from group 2-cocycles.

In this paper, we introduce a chain map from quandle homology to relative group homology (Theorem \ref{thm:leveling}).
As an immediate consequence, this chain map provides a method for constructing quandle cocycles from relative group cocycles.
We apply this method to construct explicit cocycles for several quandles.
The chain map introduced by Kabaya and the author is closely related to triangulations of link complements (see \cite[Subsection 7.4]{IK2014}).
Similarly, we show that the chain map introduced in this paper is closely related to triangulations of Seifert (hyper)surfaces of links (Theorems \ref{thm:canonical_Seifert_surface}--\ref{thm:Seifert_hypersurface}).

This paper is organized as follows.
Sections \ref{sec:quandle_and_its_homology} and \ref{sec:relative_group_homology} respectively review quandle homology and relative group homology.
The chain map from quandle homology to relative group homology is established in Section \ref{sec:chain_map}.
We construct several quandle cocycles through the chain map in Section \ref{sec:quandle_cocycles}.
Finally, in Section \ref{sec:Seifert_surface}, we study a relationship between the chain map and triangulations of Seifert (hyper)surfaces.

\section{Quandle and its homology}
\label{sec:quandle_and_its_homology}

In this section, we briefly review the notions of quandles and their homologies.
We refer the reader to \cite{Kam2017} for details.

A \emph{quandle} is defined to be a non-empty set $X$ equipped with a binary operation $\ast : X \times X \to X$ satisfying the following three axioms.
\begin{itemize}
\item[(Q1)]
For each $x \in X$, $x \ast x = x$.
\item[(Q2)]
For each $x \in X$, the map $S_{x} : X \to X$ given by $S_{x}(w) = w \ast x$ is bijective.
\item[(Q3)]
For each $x, y, z \in X$, $(x \ast y) \ast z = (x \ast z) \ast (y \ast z)$.
\end{itemize}
By axioms (Q2) and (Q3), each $S_{x}$ is an automorphism of $X$.
In what follows, we mainly focus on the following quandle.
Let $G$ be a group, $\varphi$ an automorphism of $G$, and $H$ a subgroup of $G$ satisfying $\varphi(h) = h$ for all $h \in H$.
Then the set $H \backslash G$ of right cosets of $H$ in $G$ forms a quandle with a binary operation $\ast$ defined by
\[
 H g \ast H k = H \varphi(g k^{-1}) k.
\]
We let $(G, H, \varphi)$ denote the resulting quandle, and refer to it as a \emph{generalized Alexander quandle}.
If $H = \{ 1 \}$, then we abbreviate $(G, H, \varphi)$ as $(G, \varphi)$ via the identification $H \backslash G \cong G$.

Let $X$ be a quandle.
We define a group $\As{X}$ by the presentation
\[
 \As{X} = \langle \{ g_{x} \mid x \in X \} \mid \{ g_{x \ast y}^{-1} \> g_{y}^{-1} g_{x} g_{y} \mid x, y \in X \} \rangle.
\]
This group is called the \emph{associated group} of $X$.
We refer to a set equipped with a right action of $\As{X}$ as an \emph{$X$-set}.
A typical example of an $X$-set is $X$ itself, equipped with the right action of $\As{X}$ defined by $x \cdot g_{y} = x \ast y$ for $x, y \in X$.
In what follows, we mainly focus on the $X$-set $\mathbb{Z}$ endowed with the right action of $\As{X}$ given by $u \cdot g_{x} = u + 1$ for $u \in \mathbb{Z}$ and $x \in X$.

Fix an $X$-set $Y$.
Let $C^{R}_{m}(X)_{Y}$ be the free $\mathbb{Z}$-module generated by $Y \times X^{m}$ for $m \geq 1$, and $C^{R}_{0}(X)_{Y}$ the free $\mathbb{Z}$-module generated by $Y$.
Define a homomorphism $\partial_{m} : C^{R}_{m}(X)_{Y} \to C^{R}_{m - 1}(X)_{Y}$ by
\begin{align*}
 \partial_{m}(y; x_{1}, x_{2}, \dots, x_{m})
 & = \sum_{i = 1}^{m} (-1)^{i} (y; x_{1}, \dots, \widehat{x_{i}}, \dots, x_{m}) \\
 & \phantom{=} \ \, \enskip - \sum_{i = 1}^{m} (-1)^{i} (y \cdot g_{x_{i}}; S_{x_{i}}(x_{1}), \dots, S_{x_{i}}(x_{i - 1}), x_{i + 1}, \dots, x_{m}).
\end{align*}
Then $\partial_{m - 1} \circ \partial_{m} = 0$ for each $m \geq 2$ (see \cite{Kam2017}), and hence we obtain a chain complex $(C^{R}_{\ast}(X)_{Y}, \partial_{\ast})$.

For $m \geq 2$, let $C^{D}_{m}(X)_{Y}$ be the submodule of $C^{R}_{m}(X)_{Y}$ freely generated by the elements $(y; x_{1}, x_{2}, \dots, x_{m}) \in Y \times X^{m}$ satisfying $x_{i} = x_{i + 1}$ for some $i$ with $1 \leq i \leq m - 1$.
Set $C^{D}_{0}(X)_{Y} = C^{D}_{1}(X)_{Y} = 0$.
Since $(C^{D}_{\ast}(X)_{Y}, \partial_{\ast})$ is a subchain complex of $(C^{R}_{\ast}(X)_{Y}, \partial_{\ast})$ (see \cite{Kam2017}), the quotient modules
\[
 C^{Q}_{m}(X)_{Y} = C^{R}_{m}(X)_{Y} / C^{D}_{m}(X)_{Y}
\]
together with $\partial_{m}$ form a chain complex.
We let $H^{Q}_{m}(X)_{Y}$ denote the $m$-th homology group of this chain complex $(C^{Q}_{\ast}(X)_{Y}, \partial_{\ast})$, and refer to it as the \emph{$m$-th quandle homology group} of $(X, Y)$.
For an abelian group $A$, we let $H_{Q}^{m}(X; A)_{Y}$ denote the $m$-th cohomology group of the cochain complex
\[
 (C_{Q}^{\ast}(X; A)_{Y}, \delta^{\ast}) = (\Hom{C^{Q}_{\ast}(X)_{Y}}{A}, \Hom{\partial_{\ast + 1}}{A}),
\]
and refer to it as the \emph{$m$-th quandle cohomology group} of $(X, Y)$ with coefficients in $A$.
Here, $\Hom{C^{Q}_{m}(X)_{Y}}{A}$ denotes the $\mathbb{Z}$-module of homomorphisms $C^{Q}_{m}(X)_{Y} \to A$.

Let $O$ be a set consisting of a single element $o$, which is equipped with the trivial right action of $\As{X}$.
By convention, when $Y = O$, we drop the subscript $O$ from the above notations, and let $C^{R}_{0}(X) = 0$.
We also abbreviate $(o; x_{1}, x_{2}, \dots, x_{m})$ as $(x_{1}, x_{2}, \dots, x_{m})$.

\section{Relative group homology}
\label{sec:relative_group_homology}

In this section, we briefly review the notion of relative group homology in the sense of Adamson and Hochschild.
We refer the reader to \cite{AC2017} for details.

Let $G$ be a group, and $H$ a subgroup of $G$.
As usual, we let $G / H$ denote the set of left cosets of $H$ in $G$.
For each $m \geq 0$, $G$ acts on $(G / H)^{m + 1}$ from the left by
\[
 g \cdot (g_{0} H, g_{1} H, \dots, g_{m} H) = (g g_{0} H, g g_{1} H, \dots, g g_{m} H).
\]
We define an equivalence relation $\sim$ on $(G / H)^{m + 1}$ by
\[
 (g_{0} H, g_{1} H, \dots, g_{m} H) \sim g \cdot (g_{0} H, g_{1} H, \dots, g_{m} H)
\]
for some $g \in G$.
Let $C_{m}(G / H)$ be the free $\mathbb{Z}$-module generated by $(G / H)^{m + 1} / \sim$.
Considering each generator of $C_{m}(G / H)$ as an $m$-simplex whose vertices are labeled by elements of $G / H$, define a homomorphism $\partial_{m} : C_{m}(G / H) \to C_{m - 1}(G / H)$ by
\[
 \partial_{m}(g_{0} H, g_{1} H, \dots, g_{m} H)
 = \sum_{i = 0}^{m} (-1)^{i} (g_{0} H, \dots, \widehat{g_{i} H}, \dots, g_{m} H).
\]
Then $\partial_{m - 1} \circ \partial_{m} = 0$ for each $m \geq 2$ (see \cite{AC2017}), and hence we obtain a chain complex $(C_{\ast}(G / H), \partial_{\ast})$.
We let $H_{m}(G / H)$ denote the $m$-th homology group of $(C_{\ast}(G / H), \partial_{\ast})$, and refer to it as the \emph{$m$-th relative group homology group} of $(G, H)$.
For an abelian group $A$, we let $H^{m}(G / H; A)$ denote the $m$-th cohomology group of the cochain complex
\[
 (C^{\ast}(G / H; A), \delta^{\ast}) = (\Hom{C_{\ast}(G / H)}{A}, \Hom{\partial_{\ast + 1}}{A}),
\]
and refer to it as the \emph{$m$-th relative group cohomology group} of $(G, H)$ with coefficients in $A$.
By definition, if $H = \{ 1 \}$, then $H_{m}(G / H)$ (or $H^{m}(G / H; A)$) coincides with the $m$-th group homology (or cohomology) group of $G$ under the identification $G / H \cong G$.

We end this section with a remark.
For $m \geq 1$, let $D_{m}(G / H)$ be the submodule of $C_{m}(G / H)$ freely generated by the elements $(g_{0} H, g_{1} H, \dots, g_{m} H) \in (G / H)^{m + 1} / \sim$ satisfying $g_{i} H = g_{i + 1} H$ for some $i$ with $0 \leq i \leq m - 1$.
Set $D_{0}(G / H) = 0$.
Since $(D_{\ast}(G / H), \partial_{\ast})$ is a subchain complex of $(C_{\ast}(G / H), \partial_{\ast})$ (see \cite{Mac1963}), the quotient modules
\[
 \overline{C}_{m}(G / H) = C_{m}(G / H) / D_{m}(G / H)
\]
together with $\partial_{m}$ form a chain complex.
By Theorem 6.1 in Chapter VIII of \cite{Mac1963}, the $m$-th homology group of $(\overline{C}_{\ast}(G / H), \partial_{\ast})$ is isomorphic to that of $(C_{\ast}(G / H), \partial_{\ast})$.
Similarly, the $m$-th cohomology group of the cochain complex
\[
 (\overline{C}^{\, \ast}(G / H; A), \delta^{\ast}) = (\Hom{\overline{C}_{\ast}(G / H)}{A}, \Hom{\partial_{\ast + 1}}{A})
\]
is also isomorphic to that of $(C^{\ast}(G / H; A), \delta^{\ast})$.

\section{Chain map from quandle homology to relative group homology}
\label{sec:chain_map}

Let $G$ be a group, $\varphi$ an automorphism of $G$, and $H$ a subgroup of $G$ satisfying $\varphi(h) = h$ for all $h \in H$.
We let $X$ denote the generalized Alexander quandle $(G, H, \varphi)$.
The aim of this section is to introduce a chain map from $(C^{Q}_{\ast}(X)_{\mathbb{Z}}, \partial_{\ast})$ to $(\overline{C}_{\ast}(G / H), \partial_{\ast})$.
We begin by fixing notation and conventions.

For $m \geq 2$, let $I_{m}$ be the set of maps $\iota : \{ 2, 3, \dots, m \} \to \{ 0, 1 \}$.
For each $\iota \in I_{m}$, we let $|\iota|$ denote the number of $i \in \{ 2, 3, \dots, m \}$ satisfying $\iota(i) = 1$.
Given $\iota \in I_{m}$, $i \in \{ 1, 2, \dots, m \}$, and $(u; H g_{1}, H g_{2}, \dots, H g_{m}) \in C^{Q}_{m}(X)_{\mathbb{Z}}$, we define $H g(\iota, i) \in X$ by
\[
 H g(\iota, i)
 = S_{H g_{m}}^{\iota(m)} \circ \dots \circ S_{H g_{i + 2}}^{\iota(i + 2)} \circ S_{H g_{i + 1}}^{\iota(i + 1)} (H g_{i}).
\]
For $m \geq 0$ and $u \in \mathbb{Z}$, define an endomorphism $\varphi^{u}$ of $C_{m}(G / H)$ (or $\overline{C}_{m}(G / H)$) by
\[
 \varphi^{u}(g_{0} H, g_{1} H, \dots, g_{m} H) = (\varphi^{u}(g_{0}) H, \varphi^{u}(g_{1}) H, \dots, \varphi^{u}(g_{m}) H),
\]
where $\varphi^{u}$ on the right-hand side denotes the $u$-th power of the automorphism $\varphi$.

Fix $g \in G$.
Define a homomorphism $\psi : C^{R}_{m}(X)_{\mathbb{Z}} \to C_{m}(G / H)$ by
\begin{align*}
 & \psi(u; H g_{1}, H g_{2}, \dots, H g_{m}) \\
 & = \sum_{\iota \in I_{m}} (-1)^{|\iota| + 1} \varphi^{- |\iota| - u}(g^{-1} H, g(\iota, 1)^{-1} H, g(\iota, 2)^{-1} H, \dots, g(\iota, m)^{-1} H)
\end{align*}
for $m \geq 2$,
\[
 \psi(u; H g_{1})
 = - \varphi^{- u}(g^{-1} H, g_{1}^{-1} H)
\]
for $m = 1$, and
\[
 \psi(u)
 = - \varphi^{- u}(g^{-1} H)
 = - (H)
\]
for $m = 0$.

\begin{theorem}
\label{thm:leveling}
The homomorphism $\psi : C^{R}_{m}(X)_{\mathbb{Z}} \to C_{m}(G / H)$ is a chain map.
In particular, it induces the chain map $C^{Q}_{m}(X)_{\mathbb{Z}} \to \overline{C}_{m}(G / H)$.
\end{theorem}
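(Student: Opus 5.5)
The plan is to verify $\partial_{m} \circ \psi = \psi \circ \partial_{m}$ on each generator $(u; H g_{1}, \dots, H g_{m})$ by direct expansion and a bookkeeping of cancellations. After that we check that $\psi$ sends $C^{D}_{m}(X)_{\mathbb{Z}}$ into $D_{m}(G / H)$.

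\emph{Well-definedness.} First we check that $\psi$ is well defined. A representative of $H g(\iota, i)$ is determined only up to left multiplication by $H$, so $g(\iota, i)^{-1} H$ is a well-defined left coset. Since $\varphi$ fixes $H$ pointwise, $\varphi^{u}$ is well defined on $G / H$. It also commutes with the left $G$-action up to twisting, $\varphi^{u}(k \cdot \sigma) = \varphi^{u}(k) \cdot \varphi^{u}(\sigma)$, so $\varphi^{u}$ descends to $C_{m}(G / H)$ and commutes with $\partial_{m}$. We will use throughout the explicit formula
\[
 (H a \ast H b) \leftrightarrow \bigl(\varphi(a b^{-1}) b\bigr)^{-1} H = b^{-1} \varphi(b) \varphi(a)^{-1} H ,
\]
together with the freedom to translate a whole simplex on the left by an element of $G$.

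\emph{Expanding $\partial_{m} \psi$.} Write $\partial_{m} \psi(u; H g_{1}, \dots, H g_{m})$ as a sum over $\iota \in I_{m}$ and over faces $j = 0, 1, \dots, m$, where face $j \geq 1$ deletes the vertex $g(\iota, j)^{-1} H$ and face $0$ deletes $g^{-1} H$.

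For a face $j$ with $2 \leq j \leq m$, split $I_{m}$ according to $\iota(j)$ and let $\iota'$ denote the restriction of $\iota$ to the remaining indices.
\begin{itemize}
\item If $\iota(j) = 0$, the vertices $g(\iota, i)$ with $i < j$ are computed without $S_{H g_{j}}$. The face is then exactly the $\iota'$-summand of $\psi(u; \dots, \widehat{H g_{j}}, \dots)$.
\item If $\iota(j) = 1$, we have $|\iota| = |\iota'| + 1$, so the exponent $-|\iota| - u$ equals $-|\iota'| - (u + 1)$. Since $u \cdot g_{x} = u + 1$, the face is the $\iota'$-summand of $\psi(u + 1; S_{H g_{j}}(H g_{1}), \dots, S_{H g_{j}}(H g_{j-1}), H g_{j+1}, \dots)$.
\end{itemize}
Checking the signs $(-1)^{|\iota| + 1} (-1)^{j}$ against $\mp (-1)^{j}$ in $\partial_{m}$, these faces should reproduce precisely the $i = j$ terms of $\psi(\partial_{m}(\cdots))$ for $2 \leq j \leq m$.

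\emph{The main obstacle: faces $0$ and $1$.} What remains is to show that faces $0$ and $1$ together produce the $i = 1$ terms $\psi(u; H g_{2}, \dots) - \psi(u + 1; H g_{2}, \dots)$ (each with the appropriate sign), plus cancelling pairs.
\begin{itemize}
\item Face $1$ yields $(g^{-1} H, g(\iota, 2)^{-1} H, \dots)$, which does not depend on $\iota(2)$.
\item Face $0$ yields $(g(\iota, 1)^{-1} H, \dots, g(\iota, m)^{-1} H)$.
\end{itemize}
I would first try to pair the face-$0$ term of $\iota$ with a face-$1$ term of the partner $\iota$ obtained by toggling $\iota(2)$. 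The tool is a left translation by a suitable element built from $g(\iota, 2)$ and $\varphi(g(\iota, 2))$, combined with an application of $\varphi^{\pm 1}$ via the formula displayed above. If the translation does not line up all vertices simultaneously, I would instead translate by $\varphi^{\ast}$ of the fixed element $g$, exploiting that $g$ is arbitrary. The target would then be to show that the sum of all face-$0$ terms telescopes against the face-$1$ terms, leaving only the $\iota(2)$-independent pieces, which match the two $i = 1$ quandle terms. I would check the cases $m = 1, 2$ by hand first, including the conventions $\psi(u) = -(H)$ and $C^{R}_{0}(X)_{\mathbb{Z}}$, to fix signs before doing general $m$. This step is where I expect the real difficulty.

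\emph{Passing to the quotient complexes.} Suppose $H g_{j} = H g_{j+1}$. If $\iota(j+1) = 0$, then $g(\iota, j)$ and $g(\iota, j+1)$ coincide, because both are obtained by applying the same operators $S$ for indices beyond $j+1$. Hence the simplex has two equal adjacent vertices and lies in $D_{m}$. If $\iota(j+1) = 1$, then by (Q1) $S_{H g_{j+1}}(H g_{j}) = H g_{j}$, so the same conclusion holds. Therefore $\psi(C^{D}_{m}(X)_{\mathbb{Z}}) \subset D_{m}(G / H)$, and $\psi$ induces the chain map $C^{Q}_{m}(X)_{\mathbb{Z}} \to \overline{C}_{m}(G / H)$.
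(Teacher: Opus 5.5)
\textbf{There is a genuine gap in the step you flag as the main obstacle.} Your treatment of the faces $j \geq 2$ and of the degenerate subcomplex is correct and matches the paper. The handling of faces $0$ and $1$, however, is set up wrongly, and the pairing you propose would fail.

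Face $1$ needs no help from face $0$. It is treated exactly like the faces $j \geq 2$, by splitting $I_{m}$ according to $\iota(2)$. The simplex $(g^{-1} H, g(\iota, 2)^{-1} H, \dots, g(\iota, m)^{-1} H)$ depends only on $\iota|_{\{3, \dots, m\}}$. So the summands with $\iota(2) = 0$ give $-\psi(u; H g_{2}, \dots, H g_{m})$, and those with $\iota(2) = 1$ (exponent shifted by one, sign flipped) give $+\psi(u + 1; H g_{2}, \dots, H g_{m})$. Together these are exactly $\psi$ of the $i = 1$ part of $\partial_{m}(u; H g_{1}, \dots, H g_{m})$. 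Hence the face-$0$ terms must cancel among themselves, and there is nothing left for them to telescope against.

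Your proposed matching of face-$0$ terms with face-$1$ terms cannot work anyway. Face-$1$ simplices contain the vertex $\varphi^{-|\iota| - u}(g)^{-1} H$, while face-$0$ simplices do not involve $g$ at all. Already for $m = 2$ and $H = \{1\}$, a face-$0$ simplex $\varphi^{b}(y^{-1}, g_{2}^{-1})$ and a face-$1$ simplex $\varphi^{a}(g^{-1}, g_{2}^{-1})$ lie in the same $G$-orbit only if $g g_{2}^{-1} = \varphi^{b - a}(y g_{2}^{-1})$. This fails for generic $g$. Exploiting the arbitrariness of $g$ is not an option either, since $\partial \psi = \psi \partial$ must hold for each fixed $g$. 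Finally, toggling $\iota(2)$ changes only the vertex $g(\iota, 1)^{-1} H$ while shifting the $\varphi$-exponent of every vertex. So for $m \geq 3$ it does not relate two face-$0$ simplices by a translation in general.

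\textbf{The missing idea is to toggle the last coordinate $\iota(m)$.} For every coset $H x$ one has $S_{H g_{m}}(H x) = H \varphi(x) \varphi(g_{m})^{-1} g_{m}$, and this formula also returns $H g_{m}$ when $H x = H g_{m}$. Since $S_{H g_{m}}$ is the outermost operator in every $H g(\iota, i)$, let $\overline{\iota}$ be obtained from $\iota \in I_{m}^{(m)}$ by setting $\iota(m) = 1$. Then $g(\overline{\iota}, i)^{-1} H = g_{m}^{-1} \varphi(g_{m}) \, \varphi(g(\iota, i))^{-1} H$ holds simultaneously for all $1 \leq i \leq m$.

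Apply $\varphi^{-|\overline{\iota}| - u} = \varphi^{-|\iota| - u - 1}$. The face-$0$ simplex of $\overline{\iota}$ becomes the left translate by $\varphi^{-|\iota| - u - 1}(g_{m}^{-1} \varphi(g_{m}))$ of the face-$0$ simplex of $\iota$. The coefficients $(-1)^{|\iota| + 1}$ and $(-1)^{|\overline{\iota}| + 1}$ are opposite, so all face-$0$ terms cancel in pairs. This works precisely because face $0$ is the face from which $g^{-1} H$ has been deleted, and that is the one vertex not transformed in this uniform way.

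With these two corrections your outline becomes the paper's proof.
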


\begin{proof}
We first prove that $\psi : C^{R}_{m}(X)_{\mathbb{Z}} \to C_{m}(G/H)$ is a chain map.
In the case $m = 1$, by a straightforward calculation, we obtain
\[
 \partial \circ \psi(u; H g_{1})
 = \psi \circ \partial(u; H g_{1})
 = 0.
\]
Suppose that $m \geq 2$.
For each $i \in \{ 2, 3, \dots, m \}$, let
\[
 I_{m}^{(i)} = \{ \iota \in I_{m} \mid \iota(i) = 0 \}.
\]
Given $\iota \in I_{m}^{(i)}$, define $\overline{\iota} \in I_{m}$ by
\[
 \overline{\iota}(j) =
 \begin{cases}
  1 & \text{if $j = i$}, \\
  \iota(j) & \text{otherwise}.
 \end{cases}
\]
We then have $|\overline{\iota}| = |\iota| + 1$.
By definition,
\begin{align*}
 & \partial \circ \psi(u; H g_{1}, H g_{2}, \dots, H g_{m}) \\
 & = \sum_{\iota \in I_{m}} (-1)^{|\iota| + 1} \varphi^{- |\iota| - u}(g(\iota, 1)^{-1} H, g(\iota, 2)^{-1} H, \dots, g(\iota, m)^{-1} H) \\
 & \enskip \quad + \sum_{\substack{1 \leq i \leq m, \\ \iota \in I_{m}}} (-1)^{|\iota| + i + 1} \varphi^{- |\iota| - u}(g^{-1} H, g(\iota, 1)^{-1} H, \dots, \widehat{g(\iota, i)^{-1} H}, \dots, g(\iota, m)^{-1} H).
\end{align*}
Assume that $\iota \in I_{m}^{(m)}$.
Then, for each $i \in \{ 1, 2, \dots, m - 1 \}$, we have
\[
 H g(\overline{\iota}, i)
 = H \varphi(g(\iota, i) g_{m}^{-1}) g_{m}
 = H \varphi(g(\iota, i)) \varphi(g_{m})^{-1} g_{m}.
\]
Moreover, since $H g(\iota, m) = H g_{m}$, we also have
\[
 H g(\overline{\iota}, m)
 = H g_{m}
 = H \varphi(g_{m}) \varphi(g_{m})^{-1} g_{m}
 = H \varphi(g(\iota, m)) \varphi(g_{m})^{-1} g_{m}.
\]
Therefore,
\begin{align*}
 & \sum_{\iota \in I_{m}} (-1)^{|\iota| + 1} \varphi^{- |\iota| - u}(g(\iota, 1)^{-1} H, g(\iota, 2)^{-1} H, \dots, g(\iota, m)^{-1} H) \\
 & = \sum_{\iota \in I_{m}^{(m)}} \Big( (-1)^{|\iota| + 1} \varphi^{- |\iota| - u}(g(\iota, 1)^{-1} H, g(\iota, 2)^{-1} H, \dots, g(\iota, m)^{-1} H) \\
 & \qquad + (-1)^{|\overline{\iota}| + 1} \varphi^{- |\overline{\iota}| - u}(g(\overline{\iota}, 1)^{-1} H, g(\overline{\iota}, 2)^{-1} H, \dots, g(\overline{\iota}, m)^{-1} H) \Big) \\
 & = \sum_{\iota \in I_{m}^{(m)}} \Big( (-1)^{|\iota| + 1} \varphi^{- |\iota| - u}(g(\iota, 1)^{-1} H, g(\iota, 2)^{-1} H, \dots, g(\iota, m)^{-1} H) \\
 & \qquad - (-1)^{|\iota| + 1} \varphi^{- |\iota| - u} (\varphi^{-1}(g_{m}^{-1}) g_{m} \cdot (g(\iota, 1)^{-1} H, g(\iota, 2)^{-1} H, \dots, g(\iota, m)^{-1} H)) \Big) \\
 & = 0.
\end{align*}
Hence, we have
\begin{align*}
 & \partial \circ \psi(u; H g_{1}, H g_{2}, \dots, H g_{m}) \\
 & = \sum_{\substack{1 \leq i \leq m, \\ \iota \in I_{m}}} (-1)^{|\iota| + i + 1} \varphi^{- |\iota| - u}(g^{-1} H, g(\iota, 1)^{-1} H, \dots, \widehat{g(\iota, i)^{-1} H}, \dots, g(\iota, m)^{-1} H).
\end{align*}
On the other hand,
\begin{align*}
 & \psi \circ \partial (u; H g_{1}, H g_{2}, \dots, H g_{m}) \\
 & = \psi \Big( \sum_{i = 1}^{m} (-1)^{i} (u; H g_{1}, \dots, \widehat{H g_{i}}, \dots, H g_{m}) \\
 & \qquad \qquad - \sum_{i = 1}^{m} (-1)^{i} (u + 1; S_{H g_{i}} (H g_{1}), \dots, S_{H g_{i}} (H g_{i - 1}), H g_{i + 1}, \dots, H g_{m}) \Big).
\end{align*}
For each $i$ with $2 \leq i \leq m$, we have
\begin{align*}
 & \psi(u; H g_{1}, \dots, \widehat{H g_{i}}, \dots, H g_{m}) \\
 & = \sum_{\iota \in I_{m}^{(i)}} (-1)^{|\iota| + 1} \varphi^{- |\iota| - u} (g^{-1} H, g(\iota, 1)^{-1} H, \dots, \widehat{g(\iota, i)^{-1} H}, \dots, g(\iota, m)^{-1} H).
\end{align*}
Moreover, since $S_{H g_{i}}$ is an automorphism of $X$, and $H g(\iota, j) = H g(\overline{\iota}, j)$ for each $\iota \in I_{m}^{(i)}$ and $j \in \{ i + 1, i + 2, \dots, m \}$, we have
\begin{align*}
 & \psi(u + 1; S_{H g_{i}} (H g_{1}), \dots, S_{H g_{i}} (H g_{i - 1}), H g_{i + 1}, \dots, H g_{m}) \\
 & = \sum_{\iota \in I_{m}^{(i)}} (-1)^{|\iota| + 1} \varphi^{- |\iota| - u - 1} (g^{-1} H, g(\overline{\iota}, 1)^{-1} H, \dots, \widehat{g(\overline{\iota}, i)^{-1} H}, \dots, g(\overline{\iota}, m)^{-1} H) \\
 & = - \sum_{\iota \in I_{m}^{(i)}} (-1)^{|\overline{\iota}| + 1} \varphi^{- |\overline{\iota}| - u} (g^{-1} H, g(\overline{\iota}, 1)^{-1} H, \dots, \widehat{g(\overline{\iota}, i)^{-1} H}, \dots, g(\overline{\iota}, m)^{-1} H).
\end{align*}
For $i = 1$, we have
\begin{align*}
 & \psi(u; \widehat{H g_{1}}, H g_{2}, \dots, H g_{m}) \\
 & = \sum_{\iota \in I_{m}^{(2)}} (-1)^{|\iota| + 1} \varphi^{- |\iota| - u} (g^{-1} H, \widehat{g(\iota, 1)^{-1} H}, g(\iota, 2)^{-1} H, \dots, g(\iota, m)^{-1} H).
\end{align*}
Moreover, since $H g(\iota, j) = H g(\overline{\iota}, j)$ for each $\iota \in I_{m}^{(2)}$ and $j \in \{ 2, 3, \dots, m \}$, we have
\begin{align*}
 & \psi(u + 1; H g_{2}, \dots, H g_{m}) \\
 & = \sum_{\iota \in I_{m}^{(2)}} (-1)^{|\iota| + 1} \varphi^{- |\iota| - u - 1} (g^{-1} H, \widehat{g(\iota, 1)^{-1} H}, g(\iota, 2)^{-1} H, \dots, g(\iota, m)^{-1} H) \\
 & = - \sum_{\iota \in I_{m}^{(2)}} (-1)^{|\overline{\iota}| + 1} \varphi^{- |\overline{\iota}| - u} (g^{-1} H, \widehat{g(\overline{\iota}, 1)^{-1} H}, g(\overline{\iota}, 2)^{-1} H, \dots, g(\overline{\iota}, m)^{-1} H).
\end{align*}
Therefore, we obtain
\begin{align*}
 & \psi \circ \partial (u; H g_{1}, H g_{2}, \dots, H g_{m}) \\
 & = \sum_{\iota \in I_{m}^{(2)}} (-1)^{|\iota| + 2} \varphi^{- |\iota| - u} (g^{-1} H, \widehat{g(\iota, 1)^{-1} H}, g(\iota, 2)^{-1} H, \dots, g(\iota, m)^{-1} H) \\
 & \quad + \sum_{\iota \in I_{m}^{(2)}} (-1)^{|\overline{\iota}| + 2} \varphi^{- |\overline{\iota}| - u} (g^{-1} H, \widehat{g(\overline{\iota}, 1)^{-1} H}, g(\overline{\iota}, 2)^{-1} H, \dots, g(\overline{\iota}, m)^{-1} H) \\
 & \quad + \sum_{\substack{2 \leq i \leq m, \\ \iota \in I_{m}^{(i)}}} (-1)^{|\iota| + i + 1} \varphi^{- |\iota| - u} (g^{-1} H, g(\iota, 1)^{-1} H, \dots, \widehat{g(\iota, i)^{-1} H}, \dots, g(\iota, m)^{-1} H) \\
 & \quad + \sum_{\substack{2 \leq i \leq m, \\ \iota \in I_{m}^{(i)}}} (-1)^{|\overline{\iota}| + i + 1} \varphi^{- |\overline{\iota}| - u} (g^{-1} H, g(\overline{\iota}, 1)^{-1} H, \dots, \widehat{g(\overline{\iota}, i)^{-1} H}, \dots, g(\overline{\iota}, m)^{-1} H) \\
 & = \sum_{\substack{1 \leq i \leq m, \\ \iota \in I_{m}}} (-1)^{|\iota| + i + 1} \varphi^{- |\iota| - u}(g^{-1} H, g(\iota, 1)^{-1} H, \dots, \widehat{g(\iota, i)^{-1} H}, \dots, g(\iota, m)^{-1} H) \\
& = \partial \circ \psi(u; H g_{1}, H g_{2}, \dots, H g_{m}).
\end{align*}

We next prove that $\psi$ induces the chain map $C^{Q}_{m}(X)_{\mathbb{Z}} \to \overline{C}_{m}(G / H)$.
It suffices to show that $\psi$ maps $C^{D}_{m}(X)_{\mathbb{Z}}$ into $D_{m}(G / H)$.
In the cases $m = 0, 1$, this claim follows from $C^{D}_{0}(X)_{\mathbb{Z}} = C^{D}_{1}(X)_{\mathbb{Z}} = 0$.
For $m \geq 2$, suppose that $(u; H g_{1}, H g_{2}, \dots, H g_{m}) \in C^{D}_{m}(X)_{\mathbb{Z}}$, i.e., $H g_{i} = H g_{i + 1}$ for some $i$ with $1 \leq i \leq m - 1$.
Then, since $S_{H g_{i + 1}} (H g_{i}) = H g_{i}$ by the axiom (Q1), we have $H g(\iota, i) = H g(\iota, i + 1)$ for each $\iota \in I_{m}$.
Therefore, we obtain $\psi(u; H g_{1}, H g_{2}, \dots, H g_{m}) \in D_{m}(G / H)$.
\end{proof}

\begin{theorem}
\label{thm:uniqueness}
The induced homomorphism $\psi : H^{Q}_{m}(X)_{\mathbb{Z}} \to H_{m}(G / H)$ does not depend on the choice of $g \in G$.
\end{theorem}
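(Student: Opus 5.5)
The plan is to construct an explicit chain homotopy between the chain maps $\psi_{g}$ and $\psi_{g'}$ for two choices $g, g' \in G$, in the form of a \emph{cone/prism operator} that inserts the second base vertex. The key observation is that in the definition of $\psi$ the element $g$ enters only through the first vertex $g^{-1} H$. The remaining vertices $g(\iota, i)^{-1} H$ and the twist $\varphi^{-|\iota| - u}$ do not depend on $g$. So I write $\psi_{g}(u; Hg_{1}, \dots, Hg_{m}) = \sum_{\iota} \epsilon_{\iota}\, \varphi^{-k_{\iota}}(g^{-1}H, v_{1}^{\iota}, \dots, v_{m}^{\iota})$, where $\epsilon_{\iota} = (-1)^{|\iota|+1}$, $k_{\iota} = |\iota| + u$ and $v_{i}^{\iota} = g(\iota, i)^{-1}H$. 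I then define $T : C^{R}_{m}(X)_{\mathbb{Z}} \to C_{m+1}(G/H)$ by
\[
 T(u; Hg_{1}, \dots, Hg_{m}) = \sum_{\iota \in I_{m}} \epsilon_{\iota}\, \varphi^{-k_{\iota}}(g^{-1}H, g'^{-1}H, v_{1}^{\iota}, \dots, v_{m}^{\iota})
\]
for $m \geq 2$, with the analogous formulas for $m = 1$ and for $m = 0$ (namely $T(u) = -\varphi^{-u}(g^{-1}H, g'^{-1}H)$).

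The main step is to verify $\partial T + T \partial = \pm(\psi_{g'} - \psi_{g})$. Expanding $\partial T$ gives three groups of faces:
\begin{itemize}
\item deleting the vertex $g^{-1}H$, which yields exactly the terms of $\psi_{g'}$;
\item deleting the vertex $g'^{-1}H$, which yields $-\psi_{g}$;
\item deleting some $v_{i}^{\iota}$, which yields the sum $\sum_{i,\iota} (-1)^{i}\epsilon_{\iota}\varphi^{-k_{\iota}}(g^{-1}H, g'^{-1}H, \dots, \widehat{v_{i}^{\iota}}, \dots)$.
\end{itemize}
The third group must cancel against $T\partial$. For this I will repeat verbatim the bookkeeping in the proof of Theorem \ref{thm:leveling}. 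There, the terms of $\psi \circ \partial$ were matched with the faces of $\psi$ omitting $v_{i}$, using the involution $\iota \mapsto \overline{\iota}$ on $I_{m}^{(i)}$, the identities $Hg(\iota, j) = Hg(\overline{\iota}, j)$ for $j > i$, the fact that $S_{Hg_{i}}$ is an automorphism, and the shift $u \mapsto u+1$ absorbing $|\overline{\iota}| = |\iota| + 1$.

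That matching never used anything about the leading vertex except that $\varphi^{-k}$ is applied to the whole tuple. So it goes through unchanged with the leading vertex $g^{-1}H$ replaced by the fixed pair $(g^{-1}H, g'^{-1}H)$. The only change is that every sign flips once, because each $v_{i}$ now sits in position $i+1$ rather than $i$. This is precisely what makes the two sums cancel rather than agree. Note that we do \emph{not} need the separate cancellation of the faces omitting the first vertex that appeared in that proof: here those faces are precisely $\psi_{g'}$ and $\psi_{g}$.

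In the low degrees I check directly. In degree $0$, both $\psi_{g}(u)$ and $\psi_{g'}(u)$ equal $-(H)$, since all $0$-simplices are identified under $\sim$. This agrees with $\partial T(u) = -(g'^{-1}H) + (g^{-1}H) = 0$. The case $m = 1$ is a direct three-term computation.

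Finally, $T$ sends $C^{D}_{m}(X)_{\mathbb{Z}}$ into $D_{m+1}(G/H)$. Indeed, if $Hg_{i} = Hg_{i+1}$ then $v_{i}^{\iota} = v_{i+1}^{\iota}$ by (Q1), exactly as in Theorem \ref{thm:leveling}, and these vertices remain adjacent in $T$. Hence $T$ descends to a chain homotopy on $C^{Q}_{\ast}(X)_{\mathbb{Z}} \to \overline{C}_{\ast+1}(G/H)$, and the induced maps on homology coincide. I expect the main obstacle to be the sign and index bookkeeping in the third group of faces, specifically confirming that the $i=1$ case (the involution on $I_{m}^{(2)}$) still matches when the omitted vertex is now in position $2$.
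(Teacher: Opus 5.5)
Your $T$ is the paper's chain homotopy $\Psi$ with the two base vertices in the opposite order. The paper uses $((g^{\prime})^{-1}H, g^{-1}H, \dots)$ and obtains $\psi - \psi^{\prime}$. Your reuse of the face-matching from Theorem~\ref{thm:leveling} is sound for $m \geq 2$, because there $T$ in degree $m-1$ still has the general shape and the involution $\iota \mapsto \overline{\iota}$ is available. (A small slip: the faces omitting $v_{i}^{\iota}$ carry the sign $(-1)^{i+1}$, not $(-1)^{i}$. With that sign they sum to $-T\partial$, so $\partial T + T\partial = \psi_{g^{\prime}} - \psi_{g}$.)

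The gap is in degree $1$, which you dismiss as a direct three-term computation; that computation does not close. Put $P = (g^{-1}H, (g^{\prime})^{-1}H)$. Then
\begin{align*}
 \partial T(u; Hg_{1}) & = \psi_{g^{\prime}}(u; Hg_{1}) - \psi_{g}(u; Hg_{1}) - \varphi^{-u}(P), \\
 T\partial(u; Hg_{1}) & = -T(u) + T(u+1) = \varphi^{-u}(P) - \varphi^{-u-1}(P),
\end{align*}
so $\partial T + T\partial = \psi_{g^{\prime}} - \psi_{g} - \varphi^{-u-1}(P)$. Whenever $g^{-1}H \neq (g^{\prime})^{-1}H$, the simplex $\varphi^{-u-1}(P)$ is a nondegenerate $1$-simplex, hence nonzero in $\overline{C}_{1}(G/H)$. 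This is exactly where Theorem~\ref{thm:leveling} was rescued for free. There the unmatched term $\psi(u+1)$ equals $\psi(u) = -(H)$, because all $0$-simplices are identified under $\sim$. Here $T(u+1)$ and $T(u)$ are genuinely different $1$-simplices, and $I_{1}$ has no $\overline{\iota}$ to absorb the difference. The paper's $\Psi$ has the same defect in degree $1$, so its asserted identity $\partial \Psi + \Psi \partial = \psi - \psi^{\prime}$ needs the same repair.

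The repair is short, and either of two routes works.

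\emph{Argue on $1$-cycles directly.} For $c = \sum_{j} n_{j}(u_{j}; x_{j})$ put $N(u) = \sum_{u_{j} = u} n_{j}$. Then $\partial c = \sum_{u} (N(u-1) - N(u))(u)$, so $\partial c = 0$ together with finite support forces $N \equiv 0$. Hence $\partial T(c) = \psi_{g^{\prime}}(c) - \psi_{g}(c) - \sum_{u} N(u)\varphi^{-u}(P) = \psi_{g^{\prime}}(c) - \psi_{g}(c)$.

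\emph{Redefine $T$ in degree $0$ as a telescoping sum} with $T(u+1) - T(u) = \varphi^{-u}(P)$. For example, take $T(u) = \sum_{k=0}^{u-1} \varphi^{-k}(P)$ for $u \geq 0$ and $T(u) = -\sum_{k=u}^{-1} \varphi^{-k}(P)$ for $u < 0$. The degree-$0$ identity survives because boundaries of $1$-simplices vanish in $C_{0}(G/H)$, and the degree-$1$ identity then holds on the nose.

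Nothing changes in degrees $\geq 2$, since $T$ in degree $1$ keeps its original formula.
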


\begin{proof}
Let $\psi$ and $\psi^{\prime}$ be chain maps $C^{Q}_{m}(X)_{\mathbb{Z}} \to \overline{C}_{m}(G/H)$ determined by elements $g$ and $g^{\prime}$ of $G$, respectively.
Define a homomorphism $\Psi : C^{Q}_{m}(X)_{\mathbb{Z}} \to \overline{C}_{m + 1}(G / H)$ by
\begin{align*}
 & \Psi(u; H g_{1}, H g_{2}, \dots, H g_{m}) \\
 & = \sum_{\iota \in I_{m}} (-1)^{|\iota| + 1} \varphi^{- |\iota| - u}((g^{\prime})^{-1} H, g^{-1} H, g(\iota, 1)^{-1} H, g(\iota, 2)^{-1} H, \dots, g(\iota, m)^{-1} H)
\end{align*}
for $m \geq 2$,
\[
 \Psi(u; H g_{1})
 = - \varphi^{- u}((g^{\prime})^{-1} H, g^{-1} H, g_{1}^{-1} H)
\]
for $m = 1$, and
\[
 \Psi(u)
 = - \varphi^{- u}((g^{\prime})^{-1} H, g^{-1} H)
\]
for $m = 0$.
Then a similar argument to the proof of Theorem \ref{thm:leveling} shows that $\partial \circ \Psi + \Psi \circ \partial = \psi - \psi^{\prime}$, i.e., $\Psi$ is a chain homotopy between $\psi$ and $\psi^{\prime}$.
\end{proof}

By Theorem \ref{thm:leveling}, for an abelian group $A$, each relative group $m$-cocycle $f \in \overline{C}^{\, m}(G / H; A)$ yields a quandle $m$-cocycle $f \circ \psi \in C_{Q}^{m}(X; A)_{\mathbb{Z}}$.
The cohomology class of $f \circ \psi$ in $H_{Q}^{m}(X; A)_{\mathbb{Z}}$ is independent of the choice of $g \in G$ by Theorem \ref{thm:uniqueness}.

\section{Quandle cocycles via the chain map}
\label{sec:quandle_cocycles}

In this section, we construct several concrete quandle cocycles via the chain map.
Since we only consider the case where $H$ is trivial, we abbreviate $\overline{C}^{\, m}(G / H; A)$ as $\overline{C}^{\, m}(G; A)$ under the identification $G / H \cong G$.

\subsection{}
\label{subsec:dihedral_quandle}

Let $G$ be the additive cyclic group of order $q \geq 3$, and $\varphi$ the involution of $G$ (i.e., $\varphi(x) = - x$).
We let $R_{q}$ denote the generalized Alexander quandle $(G, \varphi)$ (it is called the dihedral quandle of order $q$).
By definition, we have $x \ast y = 2 y - x$ for each $x, y \in R_{q}$.

For each integer $x$, let $\overline{x}$ denote the integer satisfying $\overline{x} \equiv x \pmod{q}$ and $0 \leq \overline{x} \leq q - 1$.
Define $b_{1} \in \overline{C}^{\, 1}(G; \mathbb{Z} / q \mathbb{Z})$ and $b_{2} \in \overline{C}^{\, 2}(G; \mathbb{Z} / q \mathbb{Z})$ respectively by
\begin{align*}
 b_{1}(0, x) & = x, &
 b_{2}(0, x, y) & = \frac{\overline{y - x} - \overline{y} + \overline{x}}{q} =
 \begin{cases}
  1 & \text{if $\overline{x} > \overline{y}$}, \\
  0 & \text{otherwise}.
 \end{cases}
\end{align*}
Then, as mentioned in \cite[Subsection 7A]{Kab2012}, $b_{1}$ and $b_{2}$ are non-trivial cocycles, and their cup products are (possibly trivial) cocycles.\footnote{We note that $b_{1}$ and $b_{2}$ are defined in \cite{Kab2012} using inhomogeneous (or bar) notation, whereas we use homogeneous notation throughout this paper.}
Let $b_{3} \in \overline{C}^{\, 3}(G; \mathbb{Z} / q \mathbb{Z})$ be the cup product of $b_{1}$ and $b_{2}$:
\begin{align*}
 b_{3}(0, x, y, z)
 & = (b_{1} \smile b_{2}) (0, x, y, z) \\
 & = b_{1} (0, x) \cdot b_{2} (x, y, z) \\
 & = b_{1} (0, x) \cdot b_{2} (0, y - x, z - x) \\
 & = x \cdot \frac{\overline{z - y} - \overline{z - x} + \overline{y - x}}{q} \\
 & =
 \begin{cases}
  x & \text{if ($\overline{x} > \overline{y} > \overline{z}$) or ($\overline{x} \neq \overline{y}$ and $\overline{x} = \overline{z}$)}, \\
  0 & \text{otherwise}.
 \end{cases}
\end{align*}
We note that $b_{3}$ is a non-trivial cocycle.
Indeed, we have
\begin{align*}
 \partial((0, 2, 1, 2) + (0, q - 2, q - 1, 0) + (0, q - 2, 0, q - 1)) & = 0, \\
 b_{3}((0, 2, 1, 2) + (0, q - 2, q - 1, 0) + (0, q - 2, 0, q - 1)) & = 2.
\end{align*}

For $m = 1, 2, 3$, we obtain quandle $m$-cocycles $\zeta_{m} \in C_{Q}^{m}(R_{q}; \mathbb{Z} / q \mathbb{Z})_{\mathbb{Z}}$ from $b_{m}$ via the chain map (setting $g = 0$):
\begin{align*}
 \zeta_{1}(u; x)
 & = - \> b_{1}(0, (-1)^{u + 1} x)
 = (-1)^{u} x, \\
 \zeta_{2}(u; x, y)
 & = - \> b_{2}(0, (-1)^{u + 1} x, (-1)^{u + 1} y) + b_{2}(0, (-1)^{u} (2 y - x), (-1)^{u} y) \\
 & =
 \begin{cases}
  1 & \text{if ($x = 0$, $y \neq 0$, and $(-1)^{u} 2 \, \overline{y} < (-1)^{u} q$) or} \\
  & \text{\phantom{if} ($x \neq 0, 2y$, $y \neq 0$, and $(-1)^{u} \, \overline{y} < (-1)^{u} \, \overline{x}, (-1)^{u} \, \overline{2 y - x}$)}, \\
  -1 & \text{if ($x = 2 y$, $y \neq 0$, and $(-1)^{u} 2 \, \overline{y} > (-1)^{u} q$) or} \\
  & \text{\phantom{if} ($x \neq 0, 2y$, $y \neq 0$, and $(-1)^{u} \, \overline{y} > (-1)^{u} \, \overline{x}, (-1)^{u} \, \overline{2 y - x}$)}, \\
  0 & \text{otherwise},
 \end{cases} \\
 \zeta_{3}(u; x, y, z)
 & = - \> b_{3}(0, (-1)^{u + 1} x, (-1)^{u + 1} y, (-1)^{u + 1} z) \\
 & \qquad + b_{3}(0, (-1)^{u} (2 y - x), (-1)^{u} y, (-1)^{u} z) \\
 & \qquad \quad + b_{3}(0, (-1)^{u} (2 z - x), (-1)^{u} (2 z - y), (-1)^{u} z) \\
 & \qquad \qquad - b_{3}(0, (-1)^{u + 1} (2 z - 2 y + x), (-1)^{u + 1} (2 z - y), (-1)^{u + 1} z) \\
 & =
 \begin{cases}
  (-1)^{u} 4 z & \text{if $[(-1)^{u} \, \overline{x} < (-1)^{u} \, \overline{y} < (-1)^{u} \, \overline{2 y - x} \leq (-1)^{u} \, \overline{z} \leq]$}, \\
  (-1)^{u} 2 z & \text{if $[(-1)^{u} \, \overline{x} < (-1)^{u} \, \overline{2 y - x} < (-1)^{u} \, \overline{y} < (-1)^{u} \, \overline{z} \leq]$}, \\
  & \text{\phantom{if} $[(-1)^{u} \, \overline{x} < (-1)^{u} \, \overline{2 y - x} \leq (-1)^{u} \, \overline{z} < (-1)^{u} \, \overline{y} <]$}, \\
  & \text{\phantom{if} $[(-1)^{u} \, \overline{x} < (-1)^{u} \, \overline{y} < (-1)^{u} \, \overline{z} < (-1)^{u} \, \overline{2 y - x} <]$}, \\
  & \text{\phantom{if} $[(-1)^{u} \, \overline{x} < (-1)^{u} \, \overline{z} < (-1)^{u} \, \overline{y} < (-1)^{u} \, \overline{2 y - x} <]$, or} \\
  & \text{\phantom{if} ($\overline{x} \neq \overline{y}$, $\overline{x} \neq \overline{z}$, $\overline{y} \neq \overline{z}$, and $\overline{x} = \overline{2 y - x}$)}, \\
  0 & \text{otherwise}.
 \end{cases}
\end{align*}
Here, $[x_{1} \diamond_{1} x_{2} \diamond_{2} x_{3} \diamond_{3} x_{4} \> \diamond_{4}]$ means that one of the following inequalities holds:
\begin{align*}
 & x_{1} \diamond_{1} x_{2} \diamond_{2} x_{3} \diamond_{3} x_{4}, &
 & x_{2} \diamond_{2} x_{3} \diamond_{3} x_{4} \diamond_{4} x_{1}, \\
 & x_{3} \diamond_{3} x_{4} \diamond_{4} x_{1} \diamond_{1} x_{2}, &
 & x_{4} \diamond_{4} x_{1} \diamond_{1} x_{2} \diamond_{2} x_{3}.
\end{align*}
Since the above values depend on $u$ only through its parity, each $\zeta_{m}$ also defines a quandle cocycle in $C_{Q}^{m}(R_{q}; \mathbb{Z} / q \mathbb{Z})_{\mathbb{Z} / 2 \mathbb{Z}}$, where $\As{R_{q}}$ acts on $\mathbb{Z} / 2 \mathbb{Z}$ from the right by $u \cdot g_{x} = u + 1$ for $u \in \mathbb{Z} / 2 \mathbb{Z}$ and $x \in R_{q}$.

\subsection{}
\label{subsec:rotE2_quandle}

For each $\theta \in \mathbb{R}$ with $0 \leq \theta < 2 \pi$, define an automorphism $\varphi_{\theta}$ of $\mathbb{C}$ by $\varphi_{\theta}(z) = z e^{\theta \sqrt{-1}}$.
Here, we consider $\mathbb{C}$ as a discrete additive group.
We let $\RotE[\theta]$ denote the generalized Alexander quandle $(\mathbb{C}, \varphi_{\theta})$.
By definition, we have
\[
 z \ast w = (z - w) e^{\theta \sqrt{-1}} + w
\]
for each $z, w \in \RotE[\theta]$.
We note that $z \ast w$ is the image of $z$ under the $\theta$-rotation about $w$.

Define a map $s : \mathbb{C}^{3} \to \mathbb{R}$ by
\[
 s(z_{0}, z_{1}, z_{2}) = \varepsilon \cdot (\text{the area of the triangle $(z_{0}, z_{1}, z_{2})$}),
\]
where $\varepsilon = + 1$ if $z_{0}, z_{1}, z_{2}$ are arranged counterclockwise, $\varepsilon = - 1$ if they are arranged clockwise, and the area of a degenerate triangle is zero.
Since $s(z_{0}, z_{1}, z_{2}) = 0$ if $z_{0} = z_{1}$ or $z_{1} = z_{2}$, and $s(z_{0} + w, z_{1} + w, z_{2} + w) = s(z_{0}, z_{1}, z_{2})$ for each $w \in \mathbb{C}$, $s$ induces a homomorphism $\overline{C}_{2}(\mathbb{C}) \to \mathbb{R}$, which we also denote by $s$.
Given $z_{0}, z_{1}, z_{2}, z_{3} \in \mathbb{C}$, consider the ``degenerate'' tetrahedron $(z_{0}, z_{1}, z_{2}, z_{3})$ in $\mathbb{C}$.
Since the boundary of this tetrahedron consists of $+ (z_{1}, z_{2}, z_{3})$, $- (z_{0}, z_{2}, z_{3})$, $+ (z_{0}, z_{1}, z_{3})$, and $- (z_{0}, z_{1}, z_{2})$, we obtain
\[
 s(\partial(z_{0}, z_{1}, z_{2}, z_{3}))
 = s(z_{1}, z_{2}, z_{3}) - s(z_{0}, z_{2}, z_{3}) + s(z_{0}, z_{1}, z_{3}) - s(z_{0}, z_{1}, z_{2})
 = 0.
\]
Hence, $s$ is a 2-cocycle (see \cite[Proof of Lemma 5.4]{Sat2026} for an alternative proof).

We obtain a quandle 2-cocycle $\xi \in C_{Q}^{2}(\RotE[\theta]; \mathbb{R})_{\mathbb{Z}}$ from $s$ via the chain map (setting $g = 0$):
\begin{align*}
 \xi(u; z_{1}, z_{2})
 & = - \> s(0, - z_{1} e^{- u \theta \sqrt{-1}}, - z_{2} e^{- u \theta \sqrt{-1}}) \\
 & \qquad + s(0, - (z_{1} \ast z_{2}) e^{- (u + 1) \theta \sqrt{-1}}, - z_{2} e^{- (u + 1) \theta \sqrt{-1}}) \\
 & = - \> s(0, - z_{1}, - z_{2}) + s(0, - (z_{1} \ast z_{2}), - z_{2}) \\
 & = - \> s(0, z_{1}, z_{2}) + s(0, z_{1} \ast z_{2}, z_{2}).
\end{align*}
Here, the second and third equalities follow from
\[
 s(w_{0} e^{\eta \sqrt{-1}}, w_{1} e^{\eta \sqrt{-1}}, w_{2} e^{\eta \sqrt{-1}}) = s(w_{0}, w_{1}, w_{2})
\]
for each $w_{i} \in \mathbb{C}$ and $\eta \in \mathbb{R}$.
Since the value of $\xi(u; z_{1}, z_{2})$ is independent of $u$, $\xi$ also defines a quandle 2-cocycle in $C_{Q}^{2}(\RotE[\theta]; \mathbb{R})$:
\[
 \xi(z_{1}, z_{2})
 = - \> s(0, z_{1}, z_{2}) + s(0, z_{1} \ast z_{2}, z_{2}).
\]
Let
\[
 \RotE = \bigsqcup_{0 \leq \theta < 2 \pi} \RotE[\theta].
\]
Then $\RotE$ forms a quandle with a binary operation $\ast$ defined as follows:
for $z \in \RotE[\theta]$ and $w \in \RotE[\eta]$,
\[
 z \ast w = (z - w) e^{\eta \sqrt{-1}} + w.
\]
Since the value of $\xi(z_{1}, z_{2})$ is independent of $\theta$, $\xi$ also defines a quandle 2-cocycle in $C_{Q}^{2}(\RotE; \mathbb{R})$.
This $\xi$ is applied in \cite{Sat2026} to the study of 1-dimensional knots.

\section{The chain map and Seifert (hyper)surface}
\label{sec:Seifert_surface}

We close this paper to study a relationship between the chain map established in Section \ref{sec:chain_map} and triangulations of Seifert (hyper)surfaces of 1- and 2-dimensional links.
The author hopes that this study will contribute to further developments in knot theory.

\subsection{}
\label{subsec:Seifert_surface_preliminaries}

We begin by briefly reviewing how quandles and their homology theories are related to knot theory.
We refer the reader to \cite{Kam2017} for details.

Let $X$ be a quandle, $Y$ an $X$-set, $L$ an oriented $n$-dimensional link ($n = 1, 2$), and $D$ a diagram of $L$.
A map $\mathscr{A} : \{ \text{arcs (or sheets) of $D$} \} \to X$ is said to be an \emph{$X$-coloring} of $D$ if
\[
 \mathscr{A}(a) \ast \mathscr{A}(b) = \mathscr{A}(c)
\]
holds at each crossing (or double point) of $D$, where $a$, $b$ and $c$ denote the arcs (or sheets) around the crossing (or double point) as depicted in Figure \ref{fig:around_crossing_point_and_regular_point} (A).
Moreover, a pair consisting of an $X$-coloring $\mathscr{A}$ of $D$ and a map $\mathscr{R} : \{ \text{regions of $D$} \} \to Y$ is said to be an \emph{$(X, Y)$-coloring} of $D$ if
\[
 \mathscr{R}(r) \cdot g_{\mathscr{A}(a)} = \mathscr{R}(s)
\]
holds around each arc (or sheet) $a$ of $D$, where $r$ and $s$ denote the regions around $a$ as depicted in Figure \ref{fig:around_crossing_point_and_regular_point} (B).
\begin{figure}[htbp]
 \centering
 \includegraphics[scale=0.25]{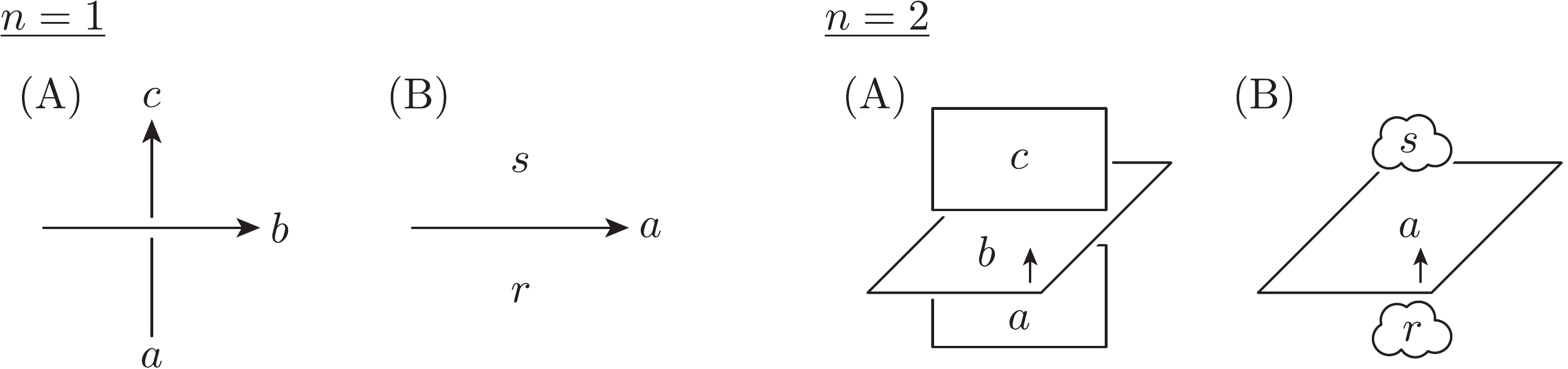}
 \caption{(A) The arcs (or sheets) around a crossing (or a double point). (B) The regions around an arc (or a sheet) $a$.}
 \label{fig:around_crossing_point_and_regular_point}
\end{figure}

Given an $(X, Y)$-coloring $(\mathscr{A}, \mathscr{R})$ of $D$, define $C(\mathscr{A}, \mathscr{R}) \in C^{R}_{n + 1}(X)_{Y}$ to be the sum of
\[
 \varepsilon (\mathscr{R}(r); \mathscr{A}(a), \mathscr{A}(b)) \quad (\text{or} \enskip \varepsilon (\mathscr{R}(r); \mathscr{A}(a), \mathscr{A}(b), \mathscr{A}(c)))
\]
over all crossings (or triple points) of $D$, where $\varepsilon$, $r$, $a$, $b$ (and $c$) are as depicted in Figure \ref{fig:around_crossing_point_and_triple_point}.
We can verify that $\partial C(\mathscr{A}, \mathscr{R}) = 0$, i.e., $C(\mathscr{A}, \mathscr{R})$ defines an $(n + 1)$-cycle in both $C^{R}_{n + 1}(X)_{Y}$ and $C^{Q}_{n + 1}(X)_{Y}$ (see \cite{CKS2001} for related arguments).
\begin{figure}[htbp]
 \centering
 \includegraphics[scale=0.25]{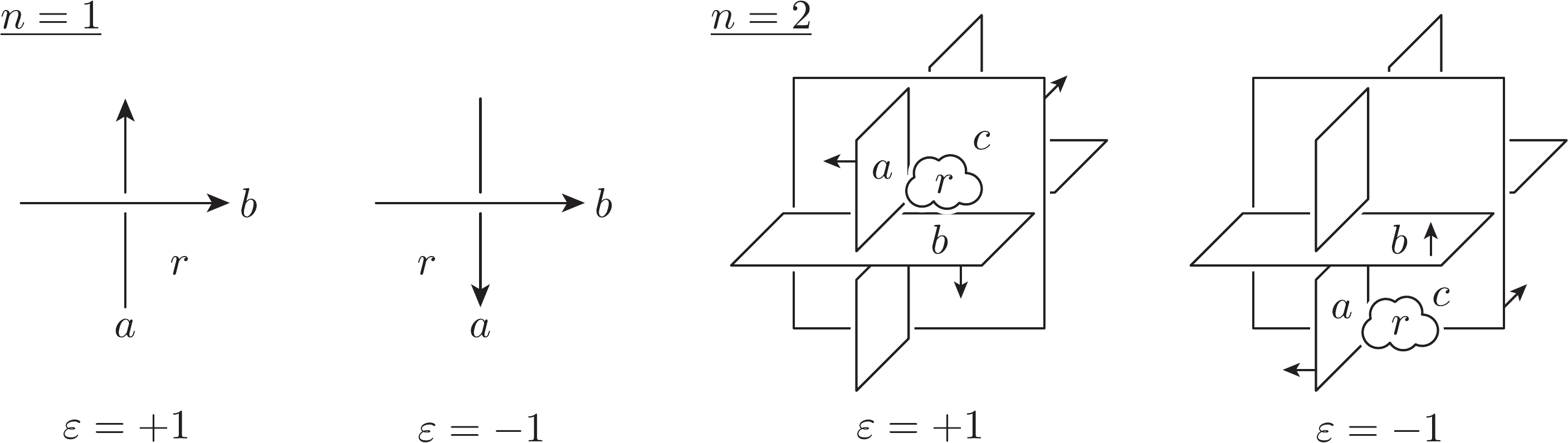}
 \caption{Arcs (or sheets) and a region around a crossing (or a triple point), and the value of $\varepsilon$.}
 \label{fig:around_crossing_point_and_triple_point}
\end{figure}

A Reidemeister (or Roseman) move naturally induces a bijection between the $(X, Y)$-colorings of the original diagram and those of the resulting diagram (see \cite{Kam2017}).
Therefore, the number of $(X, Y)$-colorings defines an invariant of $L$, called the $(X, Y)$-coloring number.
Moreover, this bijection induces a bijection between the multi-sets consisting of quandle $(n + 1)$-cycles $C(\mathscr{A}, \mathscr{R})$ derived from $(X, Y)$-colorings of the original diagram and those of the resulting diagram.
Hence, given a quandle $(n + 1)$-cocycle $\theta$ in $C_{Q}^{n + 1}(X; A)_{Y}$, the multi-set consisting of values of $\theta$ on $C(\mathscr{A}, \mathscr{R})$ defines an invariant of $L$, called the cocycle invariant.

\subsection{}
\label{subsec:Seifert_surface_1}

Let $L$ be an oriented 1-dimensional link, and $D$ a diagram of $L$.
Suppose that $D$ is connected and has at least one crossing.
Let $S_{D}$ be a canonical Seifert surface of $L$ derived from $D$ via the Seifert algorithm.
We note that $S_{D}$ is connected by the assumption that $D$ is connected.
We let $\overline{S_{D}}$ denote the closed surface obtained from $S_{D}$ by filling in its boundaries with disks.

Let $X$ be a generalized Alexander quandle $(G, H, \varphi)$.

\begin{theorem}
\label{thm:canonical_Seifert_surface}
For each $(X, \mathbb{Z})$-coloring $(\mathscr{A}, \mathscr{R})$ of $D$, the relative group 2-cycle $\psi(C(\mathscr{A}, \mathscr{R})) \in C_{2}(G / H)$ represents a triangulation of $\overline{S_{D}}$.
\end{theorem}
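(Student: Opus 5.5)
For a 1-dimensional link, $C(\mathscr{A}, \mathscr{R})$ is a sum over crossings of $\varepsilon(\mathscr{R}(r); \mathscr{A}(a), \mathscr{A}(b))$. With $m = 2$, $I_{2}$ has two elements ($\iota(2) = 0$ or $1$), so each crossing contributes two oriented triangles:
\[
 -\varepsilon\,\varphi^{-u}(g^{-1}H, g_{a}^{-1}H, g_{b}^{-1}H) \quad\text{and}\quad +\varepsilon\,\varphi^{-u-1}(g^{-1}H, g_{c}^{-1}H, g_{b}^{-1}H),
\]
where $Hg_{c} = Hg_{a} \ast Hg_{b}$ and $u = \mathscr{R}(r)$. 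The plan is to build a triangulation of $\overline{S_{D}}$ directly from the Seifert algorithm. At each crossing, the Seifert surface has a twisted band joining two Seifert disks. I would cut this band along a transverse arc and split it into two triangles. Each Seifert disk (capped off, which is why we pass to $\overline{S_{D}}$) is then coned to a single vertex. Concretely, the triangulation has the following pieces. There is one vertex $v_{\infty}$, which all simplices share as their $0$-th vertex $g^{-1}H$; I would realize it by collapsing a chosen point, or equivalently by coning each Seifert circle's disk appropriately. Next, one vertex per arc $a$ of $D$ at each of its endpoints along the band, labelled $\varphi^{-u}(g_{a}^{-1})H$ with $u$ the region color. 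Finally, edges lie along the Seifert circles and the band cores. Since an element of $C_{2}(G/H)$ is a simplex modulo the left $G$-action, I only need to exhibit the combinatorial identification and check that labels agree up to the $G$-action on each simplex.

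The key steps, in order, are as follows. (1) Fix the local model at a crossing. The two triangles $(v, a, b)$ and $(v, c, b)$ glue along the edge $(v, b)$, and the over-arc $b$ is exactly the band's transverse direction. Together they form a square, which is the band plus the two cone triangles over the Seifert disk boundary pieces; the sign $\varepsilon$ matches the orientation of $S_{D}$ induced from the checkerboard/Seifert orientation. (2) Check the gluing across arcs. An edge $(v, a)$ appearing at one crossing, where arc $a$ ends as an under-arc, must cancel with the same edge appearing at the next crossing along the Seifert circle. This is exactly the cancellation computed in the proof of Theorem~\ref{thm:leveling}: $\partial\psi(C) = \psi(\partial C) = 0$, and the terms pair off edge-by-edge. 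So I would read that proof's cancellations geometrically. Each canceling pair of faces of $\partial\psi$ is an edge of the surface shared by two triangles. The shift by $\varphi^{-1}(g_{b}^{-1})g_{b}$ (the translation used there) identifies the edges $(g(\iota,1)^{-1}H, g(\iota,2)^{-1}H)$ coming from the $\iota$ and $\overline{\iota}$ terms; these are the edges on the Seifert circles. The region-color relation $\mathscr{R}(s) = \mathscr{R}(r) + 1$ accounts for the $\varphi^{-1}$ shifts. (3) Conclude by a Euler characteristic/link-of-vertex check that the resulting complex is the closed surface $\overline{S_{D}}$. The count is $2c$ triangles for $c$ crossings. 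The edges are the $2c$ arc-edges together with $c$ over-arc edges $(v, b)$. The vertices are $v_{\infty}$ plus one vertex per Seifert circle, so $\chi = 2 - 2\cdot\mathrm{genus}(S_{D})$ by Seifert's formula $\chi(S_{D}) = s - c$. Connectivity of $D$ gives connectedness.

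The main obstacle is step (2): making the combinatorial identification precise. In particular, one must show that the edges paired by the algebraic cancellation are exactly the pairs adjacent along Seifert circles and bands, with compatible orientations. This must hold in the relative setting, where labels are only cosets modulo the $G$-action. The vertex $g^{-1}H$ being common to all simplices also needs to be interpreted correctly: either as a single cone point whose link is a circle, or after noting it is merely a labeling artifact. I would first try drawing the two triangles at a positive crossing explicitly, then propagate along one Seifert circle to see the edge matches, and only then handle the signs $\varepsilon$ and $\varphi^{-u}$ uniformly.
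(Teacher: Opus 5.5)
There is a genuine gap. The local model in step (1) glues the wrong edges, and the vertex structure you build on it is not the one the labels force. Write $T_1=-\varepsilon\,(\varphi^{-u}(g)^{-1}H,\varphi^{-u}(g_a)^{-1}H,\varphi^{-u}(g_b)^{-1}H)$ and $T_2=+\varepsilon\,(\varphi^{-u-1}(g)^{-1}H,\varphi^{-u-1}(g_c)^{-1}H,\varphi^{-u-1}(g_b)^{-1}H)$. Modulo the $G$-action, an edge $(xH,yH)$ is determined by the double coset $Hx^{-1}yH$. So the edges $(v,b)$ of $T_1$ and $T_2$ correspond to $H\varphi^{-u}(gg_b^{-1})H$ and $H\varphi^{-u-1}(gg_b^{-1})H$, which differ in general. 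Step (1) therefore fails exactly the label check you said you would perform.

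The edge the two triangles actually share is the one opposite the $0$-th vertex. Since $\varphi^{-u-1}(g_c)^{-1}H=\varphi^{-u-1}(g_b)^{-1}\varphi^{-u}(g_b)\,\varphi^{-u}(g_a)^{-1}H$, translation by $\varphi^{-u-1}(g_b)^{-1}\varphi^{-u}(g_b)$ carries $(\varphi^{-u}(g_a)^{-1}H,\varphi^{-u}(g_b)^{-1}H)$ onto $(\varphi^{-u-1}(g_c)^{-1}H,\varphi^{-u-1}(g_b)^{-1}H)$. This is the translation you quote in step (2): it is the $\iota$/$\overline{\iota}$ cancellation in the proof of Theorem~\ref{thm:leveling}, which for $m=2$ pairs the two triangles of one crossing. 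But you read those edges as lying on Seifert circles, which contradicts step (1). Geometrically, this edge is your ``transverse arc'' across the twisted band.

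Your other pairing in step (2) is correct: edges through the $0$-th vertex are matched with the next crossing along a Seifert circle. However, it does not come from that proof. Such an edge is $\psi$ of the $1$-chain $(v;\mathscr{A}(\gamma))$ of a semi-arc $\gamma$, where $v$ is the smaller of the two region colors along $\gamma$. These edges cancel in pairs because $\partial C(\mathscr{A},\mathscr{R})=0$ holds semi-arc by semi-arc. That is a diagrammatic check; $\partial\psi(C)=\psi(\partial C)=0$ alone does not tell you which faces are adjacent.

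With the correct gluings the vertices are forced, and they are not what you describe. First, $\overline{S_D}$ caps off the components of $L=\partial S_D$, not the Seifert disks, which already lie in $S_D$. The $0$-th vertices are identified only along each Seifert circle, so there is one such vertex per Seifert circle, the center of its disk. There is no common vertex $v_\infty$. Forcing all $0$-th vertices together would create a point whose link is $s$ disjoint circles, hence a non-manifold point once $s\geq 2$. The remaining vertices are identified, through the crossing edges and the semi-arc edges, into one vertex per component of $L$: the center of its capping disk.

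Thus, with $c$ crossings, $s$ Seifert circles and $\mu$ components, there are $s+\mu$ vertices, $3c$ edges ($c$ crossing edges and $2c$ semi-arc edges) and $2c$ triangles. This gives $s-c+\mu=\chi(\overline{S_D})$. Your count of $1+s$ vertices gives $1+s-c$, which is correct only for knots.

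The paper then identifies the complex with $\overline{S_D}$ directly. Cutting off the two non-$0$-th vertices of $T_1$ and $T_2$ leaves, for each, a sector of a Seifert disk together with half of the band at that crossing. These pieces reassemble into the output of the Seifert algorithm.
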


\begin{proof}
By definition, $\psi(C(\mathscr{A}, \mathscr{R}))$ is the sum of
\begin{align*}
 & \psi(\varepsilon (\mathscr{R}(r); \mathscr{A}(a), \mathscr{A}(b))) \\
 & = \psi(\varepsilon (u; H g_{1}, H g_{2})) \\
 & = - \> \varepsilon (\varphi^{-u}(g)^{-1} H, \varphi^{-u}(g_{1})^{-1} H, \varphi^{-u}(g_{2})^{-1} H) \\
 & \qquad + \varepsilon (\varphi^{- u - 1}(g)^{-1} H, \varphi^{- u - 1}(\varphi(g_{1} g_{2}^{-1}) g_{2})^{-1} H, \varphi^{- u - 1}(g_{2})^{-1} H)
\end{align*}
over all crossings of $D$, where $\varepsilon$, $r$, $a$ and $b$ are as depicted in Figure \ref{fig:around_crossing_point_and_triple_point}.
Since
\begin{align*}
 \varphi^{- u - 1}(\varphi(g_{1} g_{2}^{-1}) g_{2})^{-1}
 & = (\varphi^{- u - 1}(g_{2})^{-1} \varphi^{-u}(g_{2})) \cdot \varphi^{-u}(g_{1})^{-1}, \\
 \varphi^{- u - 1}(g_{2})^{-1}
 & = (\varphi^{- u - 1}(g_{2})^{-1} \varphi^{-u}(g_{2})) \cdot \varphi^{-u}(g_{2})^{-1},
\end{align*}
we can glue triangles
\begin{align*}
 T_{1} & = - \> \varepsilon (\varphi^{-u}(g)^{-1} H, \varphi^{-u}(g_{1})^{-1} H, \varphi^{-u}(g_{2})^{-1} H), \\
 T_{2} & = + \> \varepsilon (\varphi^{- u - 1}(g)^{-1} H, \varphi^{- u - 1}(\varphi(g_{1} g_{2}^{-1}) g_{2})^{-1} H, \varphi^{- u - 1}(g_{2})^{-1} H)
\end{align*}
along the edge
\begin{align*}
 E
 & = (\varphi^{-u}(g_{1})^{-1} H, \varphi^{-u}(g_{2})^{-1} H) \\
 & = (\varphi^{- u - 1}(g_{2})^{-1} \varphi^{-u}(g_{2})) \cdot (\varphi^{-u}(g_{1})^{-1} H, \varphi^{-u}(g_{2})^{-1} H).
\end{align*}
As illustrated in Figure \ref{fig:gluing_at_crossing}, we may perform this gluing at an appropriate position in the ambient space of $L$ corresponding to the crossing.
See also Figure \ref{fig:gluing_at_crossing_truncated}.
Moreover, as depicted in Figure \ref{fig:gluing_at_semi_arc}, for each semi-arc $s$ of $D$ connecting crossings $c$ and $c^{\prime}$ (not necessarily distinct), we may glue the triangles derived from $c$ and $c^{\prime}$ along the edge $E_{s}$ corresponding to $s$.
Since the surface obtained by these gluings is exactly $\overline{S_{D}}$, the claim follows.
\begin{figure}[htbp]
 \centering
 \includegraphics[scale=0.25]{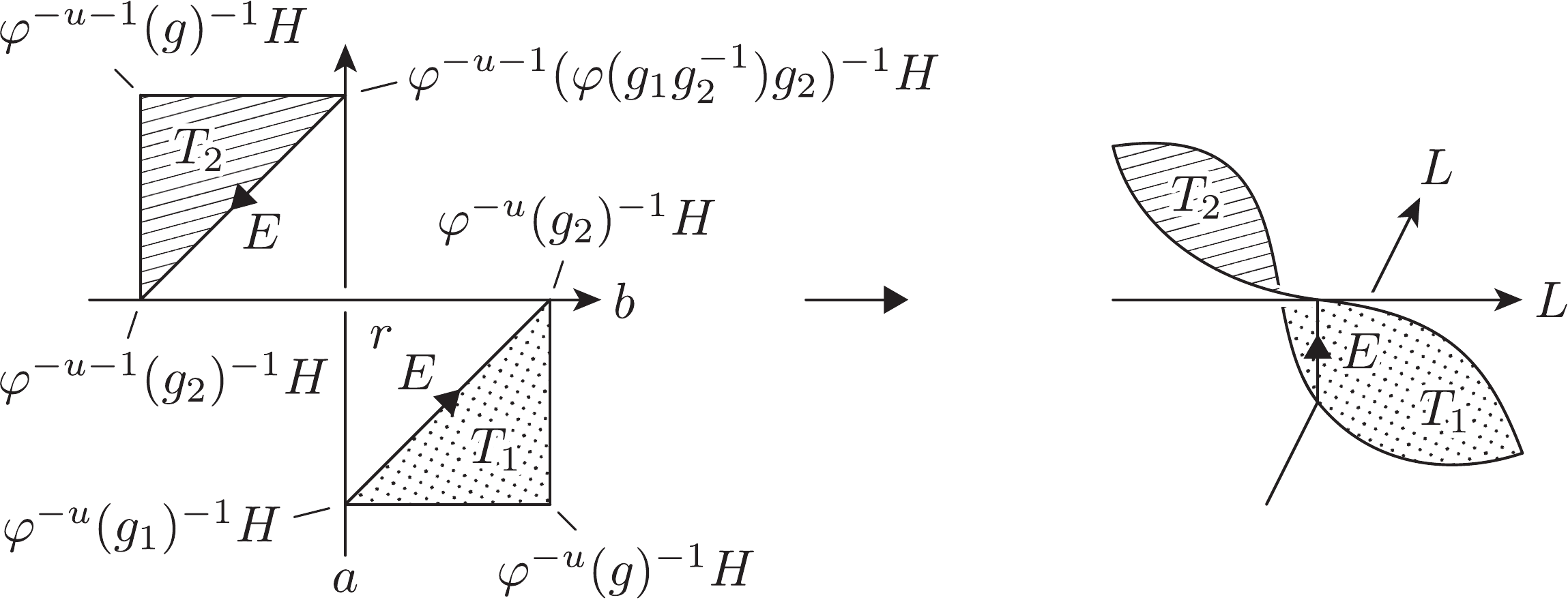}
 \caption{
  We place $T_{1}$ in $r$ and $T_{2}$ in the region opposite $r$ across the crossing, so that the vertices
  $\varphi^{-u}(g_{1})^{-1} H$, $\varphi^{-u}(g_{2})^{-1} H$, $\varphi^{- u - 1}(\varphi(g_{1} g_{2}^{-1}) g_{2})^{-1} H$, and $\varphi^{- u - 1}(g_{2})^{-1} H$
  lie on $a$, $b$, the other under arc, and $b$, respectively (left).
  Then, we glue $T_{1}$ and $T_{2}$ along $E$ in the ambient space of $L$ (right).
 }
 \label{fig:gluing_at_crossing}
\end{figure}
\begin{figure}[htbp]
 \centering
 \includegraphics[scale=0.25]{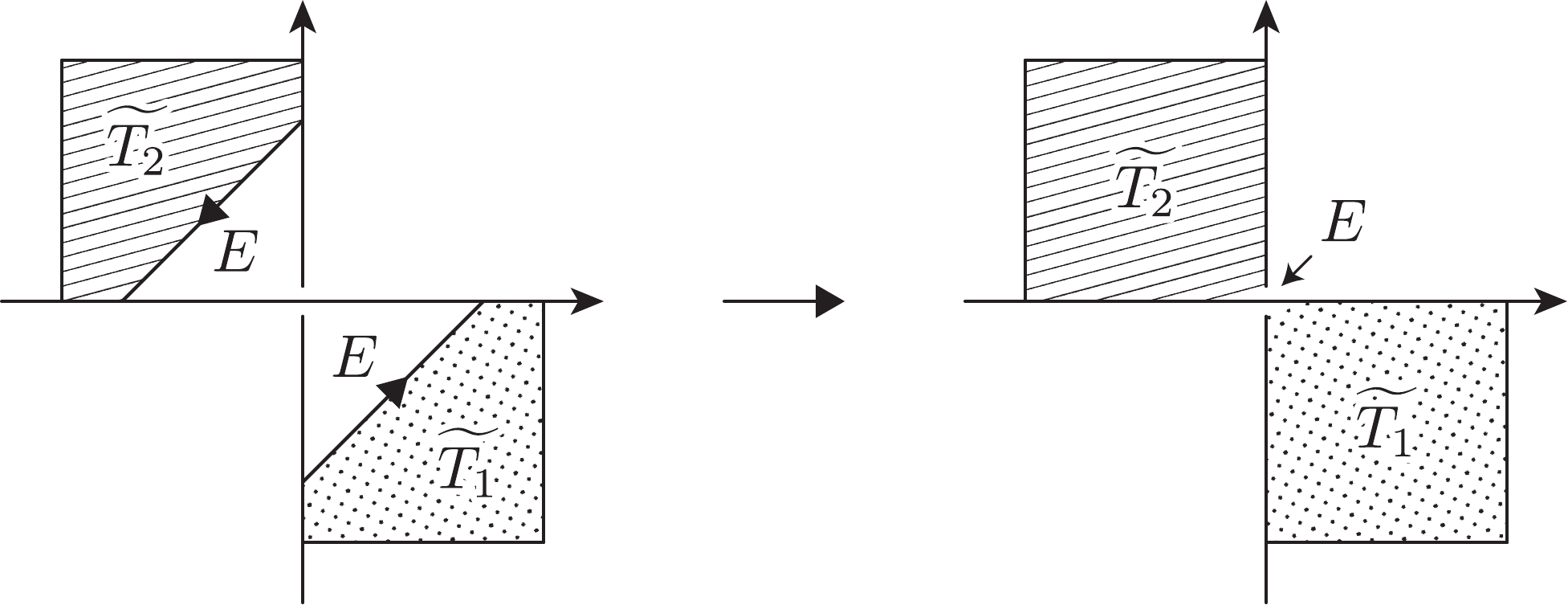}
 \caption{
  To see the relationship with the Seifert algorithm, it might be helpful to consider truncated triangles $\widetilde{T_{i}}$ (obtained from $T_{i}$ by cutting off two of its three vertices, as shown) instead of $T_{i}$.
 }
 \label{fig:gluing_at_crossing_truncated}
\end{figure}
\begin{figure}[htbp]
 \centering
 \includegraphics[scale=0.245]{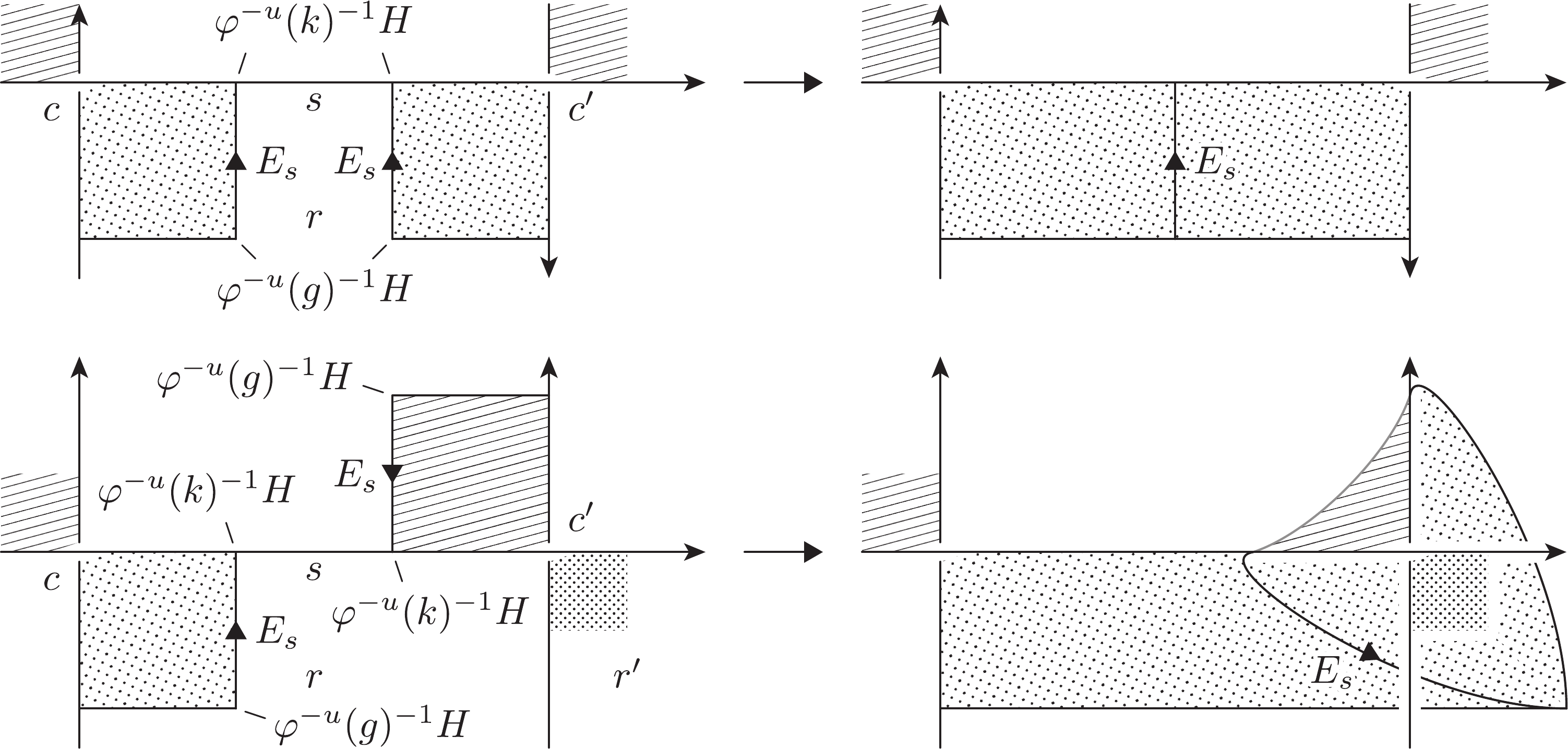}
 \caption{
  We glue the (truncated) triangles derived from $c$ and $c^{\prime}$ along $E_{s}$.
  Here, $r$ denotes the region as depicted, and we assume that $\mathscr{R}(r) = u$ and $\mathscr{A}(s) = H k$.
  We note that if $r^{\prime}$ denotes the region as depicted in the bottom figure, then $\mathscr{R}(r^{\prime}) = u - 1$.
 }
 \label{fig:gluing_at_semi_arc}
\end{figure}
\end{proof}

Let $S$ be a connected Seifert surface of $L$, which need not be canonical.
After deforming $S$ by an ambient isotopy if necessary, we may assume as depicted in the left-hand side of Figure \ref{fig:disk_band_surfaces} that $S$ admits a band projection in the sense of \cite[Section 8.B]{BZM2014}.
Suppose that $S$ has at least one band.
We may assume that each band of $S$ intersects at least one band (possibly itself) transversally in the projection.
Corresponding to each crossing of bands, turn over the upper band as depicted in the center of Figure \ref{fig:disk_band_surfaces}.
After that, we let $D_{S}$ denote the diagram of $L$ obtained from the projection.
We let $\overline{S}$ denote the closed surface obtained from $S$ by filling in its boundaries with disks.
\begin{figure}[htbp]
 \centering
 \includegraphics[scale=0.25]{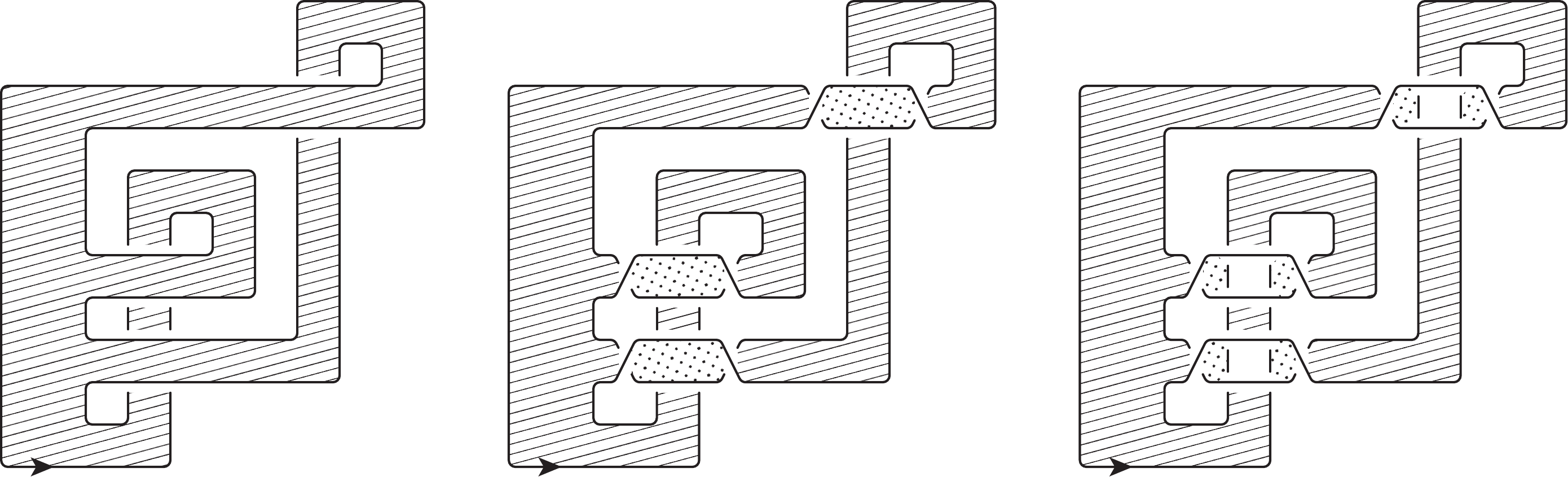}
 \caption{A band projection of $S$ (left), that after turning over the upper bands (center), and $S^{\prime}$ derived from $D_{S}$ (right).}
 \label{fig:disk_band_surfaces}
\end{figure}

\begin{theorem}
\label{thm:general_Seifert_surface}
For each $(X, \mathbb{Z})$-coloring $(\mathscr{A}, \mathscr{R})$ of $D_{S}$, the relative group 2-cycle $\psi(C(\mathscr{A}, \mathscr{R})) \in C_{2}(G / H)$ represents a triangulation of $\overline{S}$.
\end{theorem}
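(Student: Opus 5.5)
The plan is to reduce Theorem \ref{thm:general_Seifert_surface} to the argument of Theorem \ref{thm:canonical_Seifert_surface}, applied now to $D_{S}$. As depicted on the right-hand side of Figure \ref{fig:disk_band_surfaces}, the diagram $D_{S}$ determines a Seifert surface $S^{\prime}$. The key observation is that $S^{\prime}$ is $S$ itself, possibly after an isotopy. Turning over the upper band at each band crossing makes all crossings of $D_{S}$ lie in pairs along crossing bands, with the local picture of each pair being a standard Seifert-algorithm twist. Hence the Seifert circles of $D_{S}$ correspond to the disks (and band cores) of the band projection.

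Concretely, the proof would proceed in the following order.

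\begin{enumerate}
\item Identify the Seifert circles of $D_{S}$ and check that the canonical Seifert surface $S_{D_{S}}$ is ambient isotopic to $S$. Each disk of $S$ yields a Seifert disk. Each band crossing, after the turn-over, yields a small configuration of Seifert disks joined by half-twisted bands at the crossings of $D_{S}$. The half-twists introduced by turning over a band should cancel in pairs, so the resulting surface reassembles into $S$.
\item Check that $D_{S}$ is connected and has at least one crossing. This follows from $S$ being connected, from $S$ having at least one band, and from each band crossing some band. This makes the hypotheses of Theorem \ref{thm:canonical_Seifert_surface} hold for $D_{S}$.
\item Apply Theorem \ref{thm:canonical_Seifert_surface} to conclude that $\psi(C(\mathscr{A}, \mathscr{R}))$ represents a triangulation of $\overline{S_{D_{S}}} \cong \overline{S}$. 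Filling boundaries with disks is compatible with the isotopy $S_{D_{S}} \simeq S$.
\end{enumerate}

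If the identification $S_{D_{S}} \simeq S$ fails globally, for instance because orientations of Seifert circles force nesting, I would instead redo the gluing argument of Theorem \ref{thm:canonical_Seifert_surface} directly on $S$. At each crossing, the triangles $T_{1}$ and $T_{2}$ are glued along $E$ exactly as in Figure \ref{fig:gluing_at_crossing}. Along each semi-arc, adjacent triangles are glued as in Figure \ref{fig:gluing_at_semi_arc}. These gluings are purely local, relying only on the identities
\[
 \varphi^{- u - 1}(\varphi(g_{1} g_{2}^{-1}) g_{2})^{-1} = (\varphi^{- u - 1}(g_{2})^{-1} \varphi^{-u}(g_{2})) \cdot \varphi^{-u}(g_{1})^{-1}
\]
and its companion for the edge $E_{s}$, so they hold verbatim for any diagram. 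The only global step is recognising the glued surface. For that, the truncated triangles $\widetilde{T_{i}}$ (Figure \ref{fig:gluing_at_crossing_truncated}) should be placed near each crossing of $D_{S}$ so that they sweep out the portion of $S$ near that crossing: a band, or the twisted part of a turned-over band. Their union with the semi-arc strips should then be shown to be $S$ with the boundary disks filled.

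The main obstacle is the geometric identification in step 1, or equivalently the recognition of the glued surface in the fallback argument. One must carefully track the turned-over bands. At a band crossing, the upper band's two edges both cross over the lower band's two edges, producing four crossings of $D_{S}$. The triangles from these four crossings must be shown to assemble into the two overlapping band pieces rather than some other surface. I would verify this by a local picture at a single band crossing, keeping track of region colors $u$ and $u \pm 1$ as in Figure \ref{fig:gluing_at_semi_arc}. I would then note that away from band crossings the gluing is identical to the canonical case.
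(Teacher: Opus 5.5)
Your plan has a genuine gap. Its key observation, that the canonical Seifert surface $S' = S_{D_{S}}$ is isotopic to $S$, is false, and the paper itself warns that $\overline{S'}$ may differ from $\overline{S}$. The failure is local at every band crossing, not a global nesting issue.

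After the turn-over, $D_{S}$ has six crossings at each band crossing, not four. They are the four edge-over-edge crossings plus one for each of the two half-twists that turn the upper band over and back. Because the two bands now show opposite sides, the oriented smoothings at the four corner crossings cap off each of the four half-bands along a side of the overlap square. The two twist crossings cut the upper band on either side. Since every band is cut somewhere, the Seifert circles are unnested, and the Seifert disks are exactly the pieces of $S$ left after these cuts.

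Now compare how the two surfaces are rebuilt from these pieces:
\begin{itemize}
\item $S$ uses the two twist bands plus the two squares that join the halves of each band across the crossing;
\item $S'$ uses the two twist bands plus four corner bands, which join each half of one band to both halves of the other.
\end{itemize}
Hence $\chi(S') = \chi(S) - 2k$, where $k \geq 1$ is the number of band crossings. So $\overline{S'}$ has genus larger than $\overline{S}$ by $k$. For instance, take a one-disk, two-band genus-one surface whose bands cross once. Then $D_{S}$ has six crossings and three Seifert circles, so $S'$ has genus two. Step~1 therefore cannot be carried out.

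Your fallback does not repair this. Suppose you perform exactly the gluings of Theorem~\ref{thm:canonical_Seifert_surface}: $T_{1}$ to $T_{2}$ along $E$ at every crossing, and neighbouring triangles along $E_{s}$ at every semi-arc. The abstract complex is then fixed by those identifications, not by where you place the triangles, and it is $\overline{S'}$.

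What is missing is the paper's regluing step. Near each band crossing, certain vertices of triangles coming from different crossings of $D_{S}$ carry the same label in $G/H$. Every label has the form $\varphi^{-w}(k)^{-1}H$ or $\varphi^{-w}(g)^{-1}H$, with $Hk$ an arc colour and $w$ a nearby region colour. After the turn-over, the overlap square has the same $\mathbb{Z}$-colour as its surrounding regions; this is presumably the purpose of turning the band over. These coincidences let you cut the triangles apart there and reassemble the same chain $\psi(C(\mathscr{A}, \mathscr{R}))$ in a different pattern, no longer truncating some vertices. That regluing removes the extra handle, and the result is a triangulation of $\overline{S}$.

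Your local check at a band crossing must exhibit exactly this new gluing, justified by the label coincidences. The canonical gluing there produces the annulus of $S'$, not two crossing band pieces.
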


\begin{proof}
Let $S^{\prime}$ be a canonical Seifert surface of $L$ derived from $D_{S}$ (see the right-hand side of Figure \ref{fig:disk_band_surfaces}).
By Theorem \ref{thm:canonical_Seifert_surface}, $\psi(C(\mathscr{A}, \mathscr{R}))$ represents a triangulation of $\overline{S^{\prime}}$.
Although $\overline{S^{\prime}}$ may differ from $\overline{S}$, by regluing triangles at each crossing of bands of $S$ as illustrated in Figure \ref{fig:regluing}, we obtain a triangulation of $\overline{S}$ which $\psi(C(\mathscr{A}, \mathscr{R}))$ also represents.
\begin{figure}[htbp]
 \centering
 \includegraphics[scale=0.25]{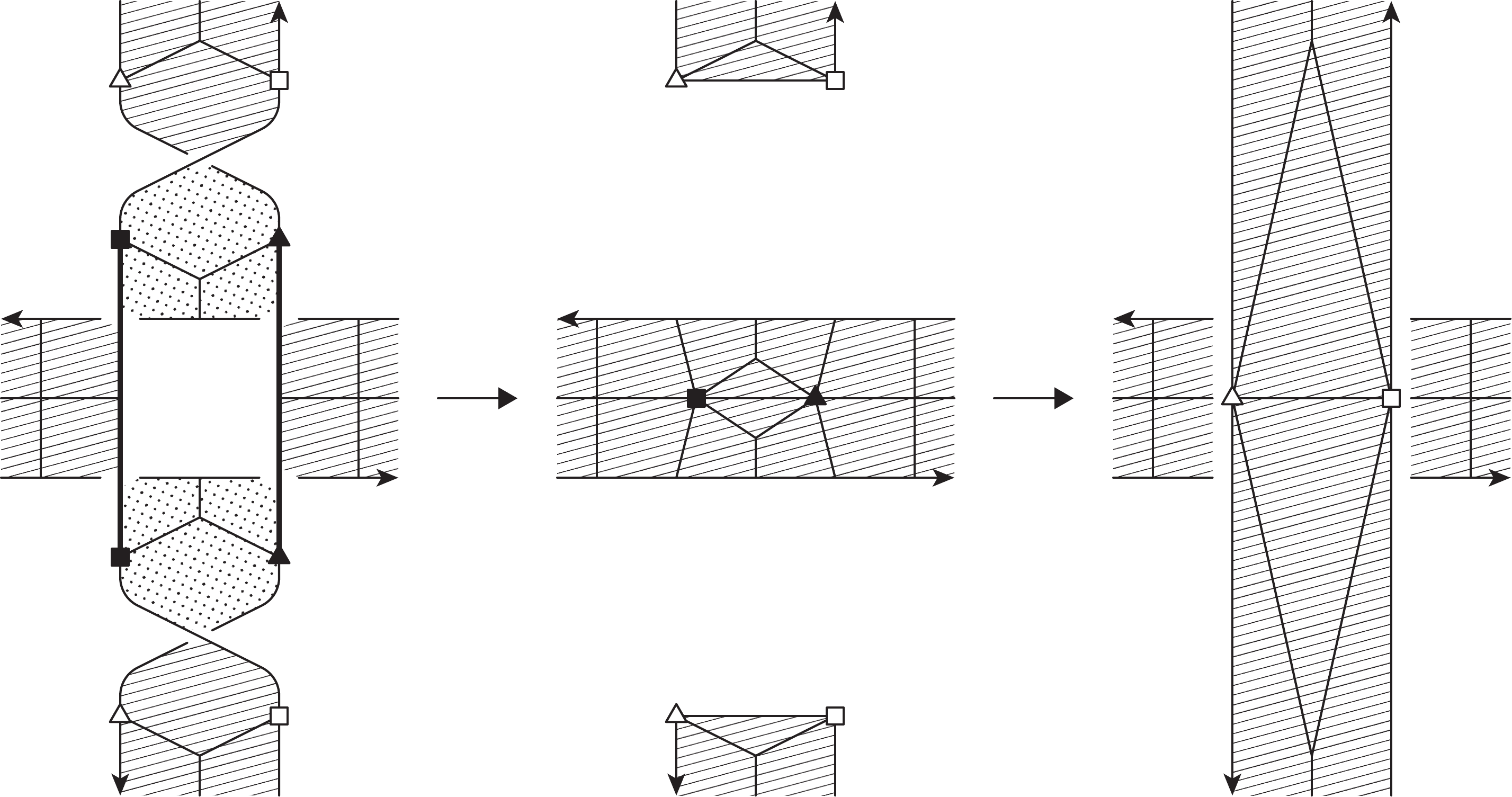}
 \caption{
  We may reglue (truncated) triangles as shown, because the vertices marked by the same symbols are labeled by the same elements of $G / H$.
  In the first step, we also stop truncating the vertices indicated by thick lines.
 }
 \label{fig:regluing}
\end{figure}
\end{proof}

\subsection{}
\label{subsec:Seifert_surface_2}

Let $L$ be an oriented 2-dimensional link, and $D$ a diagram of $L$.
Suppose that the double decker set of $L$ has no connected components being homeomorphic to circles, and all of its complementary regions are homeomorphic to disks.
We note in this situation that $D$ has at least one triple point.
Let $S_{D}$ be a Seifert hypersurface of $L$ derived from $D$ via the generalized Seifert algorithm given by Carter and Saito \cite[Section 5.4]{CS1998}.
We let $\overline{S_{D}}$ denote the topological space obtained from $S_{D}$ by coning off its boundaries.

\begin{theorem}
\label{thm:Seifert_hypersurface}
For each $(X, \mathbb{Z})$-coloring $(\mathscr{A}, \mathscr{R})$ of $D$, the relative group 3-cycle $\psi(C(\mathscr{A}, \mathscr{R})) \in C_{3}(G / H)$ represents a triangulation of $\overline{S_{D}}$.
\end{theorem}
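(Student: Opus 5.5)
We follow the same strategy as in the proof of Theorem \ref{thm:canonical_Seifert_surface}, one dimension up. By definition, $\psi(C(\mathscr{A}, \mathscr{R}))$ is the sum, over all triple points of $D$, of
\[
 \psi(\varepsilon (u; H g_{1}, H g_{2}, H g_{3})) = \sum_{\iota \in I_{3}} (-1)^{|\iota| + 1} \varepsilon \, \varphi^{- |\iota| - u}(g^{-1} H, g(\iota, 1)^{-1} H, g(\iota, 2)^{-1} H, g(\iota, 3)^{-1} H).
\]
Here $u = \mathscr{R}(r)$ and $H g_{1}, H g_{2}, H g_{3}$ are the colors of the bottom, middle and top sheets. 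So each triple point contributes four tetrahedra, indexed by $\iota \in I_{3} = \{0,1\}^{\{2,3\}}$. First, we would place the tetrahedron indexed by $\iota$ in the octant around the triple point reached from $r$ by crossing the middle sheet $\iota(2)$ times and the top sheet $\iota(3)$ times. Along such a path the region color becomes $u + |\iota|$, and the bottom and middle sheet colors become $H g(\iota, 1)$ and $H g(\iota, 2)$. This matches the octant labels, which are the four octants on one side of the bottom sheet. We would locate the common vertex $\varphi^{-|\iota|-u}(g^{-1})H$ as a cone point "at infinity" in that octant. The other three vertices would lie on the bottom, middle and top sheets respectively, exactly as the triangles $T_{1}, T_{2}$ sat in the regions around a crossing.

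Second, we would glue the four tetrahedra at a single triple point. The first cancellation computation in the proof of Theorem \ref{thm:leveling} is the key input. For $\iota \in I_{3}^{(3)}$, the faces opposite $g^{-1}H$ of the tetrahedra for $\iota$ and $\overline{\iota}$ differ by the left translation by $\varphi^{-1}(g_{3}^{-1}) g_{3}$, after applying $\varphi^{-|\iota|-u}$. Hence they are equal as elements of $(G/H)^{3}/\sim$ and occur with opposite signs, so they can be glued along a common triangle. This triangle lies in the top sheet near the triple point, analogous to the edge $E$ at a crossing. The analogous identities $H g(\iota, j) = H g(\overline{\iota}, j)$ for $\iota \in I_{3}^{(2)}$ and $j \geq 2$ then match faces across the middle sheet. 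Geometrically, this reproduces the local picture of the Carter--Saito generalized Seifert algorithm at a triple point, where the 3-dimensional pieces are attached across the top and middle sheets.

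Third, we would glue tetrahedra from different triple points. The remaining faces of $\psi(\varepsilon(u; \dots))$ are exactly the terms of $\psi \circ \partial$, that is, $\psi$ applied to the 2-chains $(u; \dots, \widehat{H g_{i}}, \dots)$ and $(u+1; S_{H g_{i}}(\cdot), \dots)$. These are the contributions attached to the edges of the double point curves issuing from the triple point, analogous to the semi-arcs in Theorem \ref{thm:canonical_Seifert_surface}. Because the double decker set has no circle components, every double point edge connects two triple points (possibly the same one). Along such an edge the sheet and region colors are constant, so the triangle faces from its two endpoints carry the same labels in $(G/H)^{3}/\sim$, with opposite signs since $\partial C(\mathscr{A}, \mathscr{R}) = 0$. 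We glue them along a triangle placed over that double edge. Because the complementary regions of the double decker set are disks, the faces lying over sheet regions, namely the faces opposite vertices on the sheets, are cones over these disks. After the gluings they form the cones over $\partial S_{D}$, which gives the coning in $\overline{S_{D}}$.

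The main obstacle is the last identification: checking that the complex assembled this way is homeomorphic to $\overline{S_{D}}$, and not merely some 3-complex representing the same chain. We would proceed by truncating the vertices lying on the sheets, as in Figure \ref{fig:gluing_at_crossing_truncated}. We then compare the truncated complex, region by region (octants, double edges, sheet disks), with the decomposition of $S_{D}$ produced by the generalized Seifert algorithm of \cite[Section 5.4]{CS1998}, and check that the remaining vertices exactly cone off the boundary components.
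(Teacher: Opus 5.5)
\textbf{There is a genuine gap.} Your overall architecture matches the paper's: four tetrahedra per triple point, a local gluing, gluing along double curves using the double-decker hypothesis, and comparison with the Carter--Saito smoothing. But the local model you propose is incorrect, and it is exactly the input to the identification with $\overline{S_{D}}$ that you leave open.

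\textbf{Placement.} Write $\iota = (\iota(2), \iota(3))$. The proof of Theorem \ref{thm:leveling} tells you where each face of the tetrahedron $\iota$ lies.
\begin{itemize}
\item The face omitting vertex $1$ is a term of $\psi(u + \iota(2); H g_{2}, H g_{3})$. So it lies over the half of the middle--top double curve on side $\iota(2)$ of the bottom sheet.
\item The face omitting vertex $2$ is a term of $\psi(u; H g_{1}, H g_{3})$ or $\psi(u+1; S_{H g_{2}}(H g_{1}), H g_{3})$. So it lies over the half of the bottom--top curve on side $\iota(2)$ of the middle sheet.
\item The face omitting vertex $3$ is a term of $\psi(u; H g_{1}, H g_{2})$ or $\psi(u+1; S_{H g_{3}}(H g_{1}), S_{H g_{3}}(H g_{2}))$. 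So it lies over the half of the bottom--middle curve on side $\iota(3)$ of the top sheet.
\end{itemize}
Hence tetrahedron $\iota$ belongs to the octant reached from $r$ by crossing the bottom \emph{and} middle sheets $\iota(2)$ times and the top sheet $\iota(3)$ times. These octants are $r$, the octant across the top sheet, the octant across the bottom and middle sheets, and the octant opposite $r$. They are not the four octants on one side of the bottom sheet.

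These four octants are separated from $r$ by $0,1,2,3$ sheets, so they lie in the four distinct local complementary components of the oriented smoothing. Your choice puts two tetrahedra in one component and none opposite $r$.

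Your heuristic that $-|\iota|-u$ is minus the colour of the containing region already fails in dimension one. The analogue of your rule would put $T_{2}$ in the region across the over-arc from $r$. But its edges other than $E$ are, up to sign, $\psi(u+1; H g_{2})$ and $\psi(u+1; S_{H g_{2}}(H g_{1}))$. These are the semi-arcs bounding the region opposite $r$, which has colour $u+2$, and that is where Figure \ref{fig:gluing_at_crossing} places $T_{2}$.

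\textbf{Local gluing.} It is not ``faces across the top sheet and faces across the middle sheet''. In general, the only faces of the four tetrahedra that agree in $(G/H)^{3}/\sim$ with opposite signs are the faces opposite $g^{-1}H$ within the pairs $\{(0,0),(0,1)\}$ and $\{(1,0),(1,1)\}$. Your top-sheet gluing is correct.

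Tetrahedra differing in $\iota(2)$ share no face, only the bottom--middle edge $E$. This is because $\psi(u; H g_{1}, H g_{2})$ and $\psi(u+1; S_{H g_{3}}(H g_{1}), S_{H g_{3}}(H g_{2}))$ each consist of two triangles glued along $E$, as in Theorem \ref{thm:canonical_Seifert_surface}. The identities $H g(\iota, j) = H g(\overline{\iota}, j)$ only recognize faces as terms of $\psi \circ \partial$; they produce no gluing inside a triple point. This is why the paper's local gluing (Figure \ref{fig:gluing_at_triple_point}) consists of face gluings plus separate edge identifications.

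\textbf{Missing identification.} The real content of the proof is checking, via the truncation of Figure \ref{fig:gluing_at_triple_point_truncated}, that the glued block is the part of $\overline{S_{D}}$ produced at the triple point by the smoothing of Figure \ref{fig:smoothing_at_triple_point}. You explicitly leave this comparison undone. With your placement and gluing pattern it could not succeed, so as it stands the proposal does not show that the complex is $\overline{S_{D}}$.
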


\begin{proof}
By definition, $\psi(C(\mathscr{A}, \mathscr{R}))$ is the sum of
\begin{align*}
 & \psi(\varepsilon (\mathscr{R}(r); \mathscr{A}(a), \mathscr{A}(b), \mathscr{A}(c))) \\
 & = \psi(\varepsilon (u; H g_{1}, H g_{2}, H g_{3})) \\
 & = - \> \varepsilon (\varphi^{-u}(g)^{-1} H, \varphi^{-u}(g_{1})^{-1} H, \varphi^{-u}(g_{2})^{-1} H, \varphi^{-u}(g_{3})^{-1} H) \\
 & \phantom{=} \ + \varepsilon (\varphi^{- u - 1}(g)^{-1} H, \varphi^{- u - 1}(\varphi(g_{1} g_{2}^{-1}) g_{2})^{-1} H, \varphi^{- u - 1}(g_{2})^{-1} H, \varphi^{- u - 1}(g_{3})^{-1} H) \\
 & \phantom{=} \ + \varepsilon (\varphi^{- u - 1}(g)^{-1} H, \varphi^{- u - 1}(\varphi(g_{1} g_{3}^{-1}) g_{3})^{-1} H, \\
 & \hspace{12em} \varphi^{- u - 1}(\varphi(g_{2} g_{3}^{-1}) g_{3})^{-1} H, \varphi^{- u - 1}(g_{3})^{-1} H) \\
 & \phantom{=} \ - \varepsilon (\varphi^{- u - 2}(g)^{-1} H, \varphi^{- u - 2}(\varphi^{2}(g_{1} g_{2}^{-1}) \varphi(g_{2} g_{3}^{-1}) g_{3})^{-1} H, \\
 & \hspace{12em} \varphi^{- u - 2}(\varphi(g_{2} g_{3}^{-1}) g_{3})^{-1} H, \varphi^{- u - 2}(g_{3})^{-1} H)
\end{align*}
over all triple points of $D$, where $\varepsilon$, $r$, $a$, $b$ and $c$ are as depicted in Figure \ref{fig:around_crossing_point_and_triple_point}.
We may glue the above four tetrahedra as depicted in Figure \ref{fig:gluing_at_triple_point}.
See also Figure \ref{fig:gluing_at_triple_point_truncated}.
In the generalized Seifert algorithm, each triple point of $D$ is smoothed as depicted in Figure \ref{fig:smoothing_at_triple_point}.
Therefore, this gluing yields a triangulation of a part of $\overline{S_{D}}$ corresponding to the triple point.
Moreover, in a similar manner illustrated in Figure \ref{fig:gluing_at_semi_arc}, for each double point curve connecting two triple points (not necessarily distinct), we may glue the tetrahedra derived from the triple points along two faces corresponding to the double point curve.
By the assumption on the double decker set, these gluings yield a triangulation of $\overline{S_{D}}$.
\begin{figure}[htbp]
 \centering
 \includegraphics[scale=0.25]{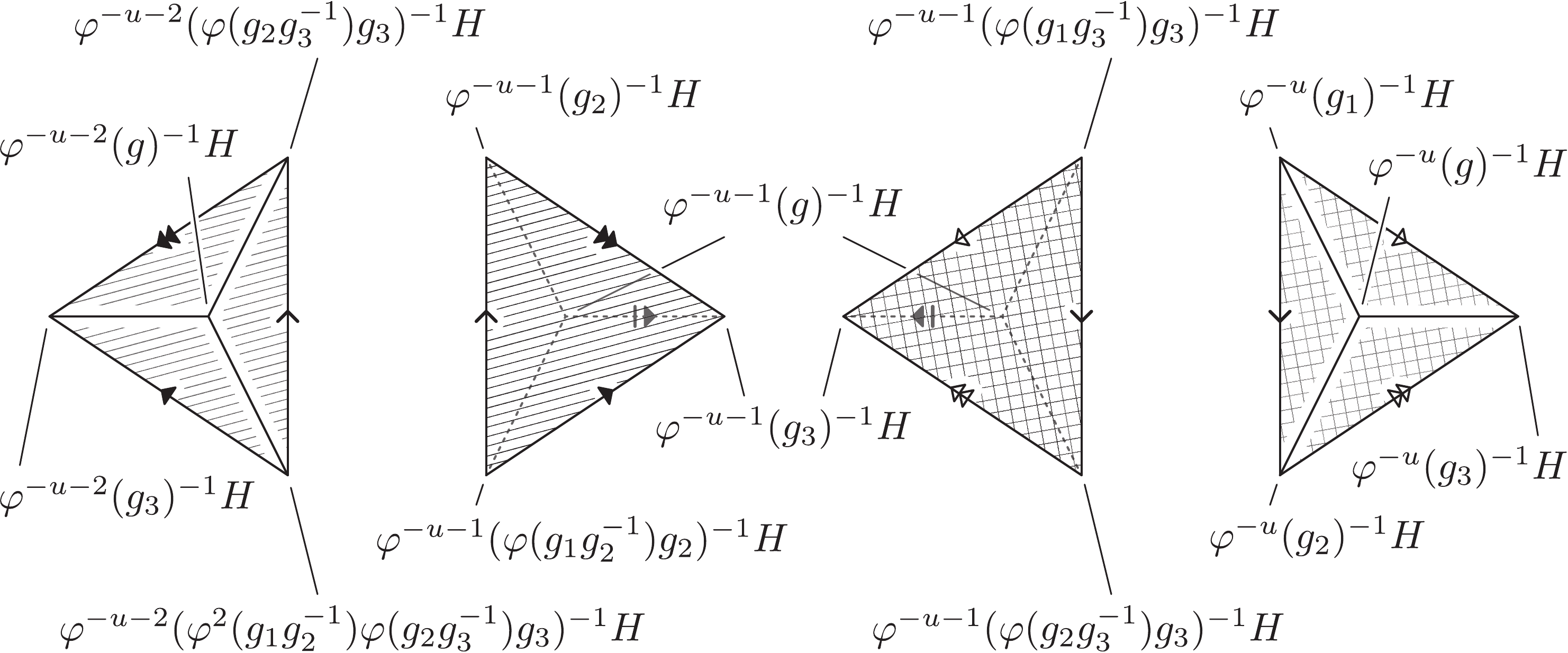}
 \caption{
  We glue the faces indicated by the same patterns so that the arrows assigned to the boundary edges match.
  Moreover, we also glue the edges marked by \includegraphics[scale=0.25]{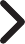} or \includegraphics[scale=0.25]{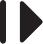} so that their orientations coincide.
  We note that these gluings can be performed in $S^{4}$.
 }
 \label{fig:gluing_at_triple_point}
\end{figure}
\begin{figure}[htbp]
 \centering
 \includegraphics[scale=0.25]{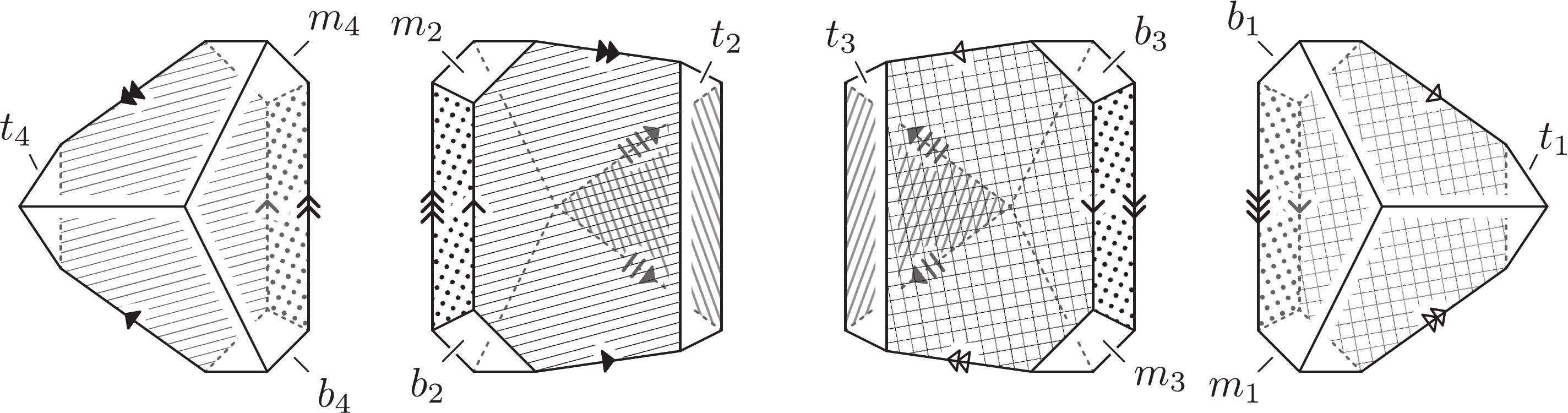}
 \caption{
  To verify the relationship with the generalized Seifert algorithm, it might be helpful to cut off some vertices and edges of the four tetrahedra as shown.
  Here, faces $t_{i}$, $m_{i}$ and $b_{i}$ ($1 \leq i \leq 4$) correspond to the parts of the top, middle and bottom sheets of the triple point (indicated in Figure \ref{fig:smoothing_at_triple_point}), respectively.
 }
 \label{fig:gluing_at_triple_point_truncated}
\end{figure}
\begin{figure}[htbp]
 \centering
 \includegraphics[scale=0.25]{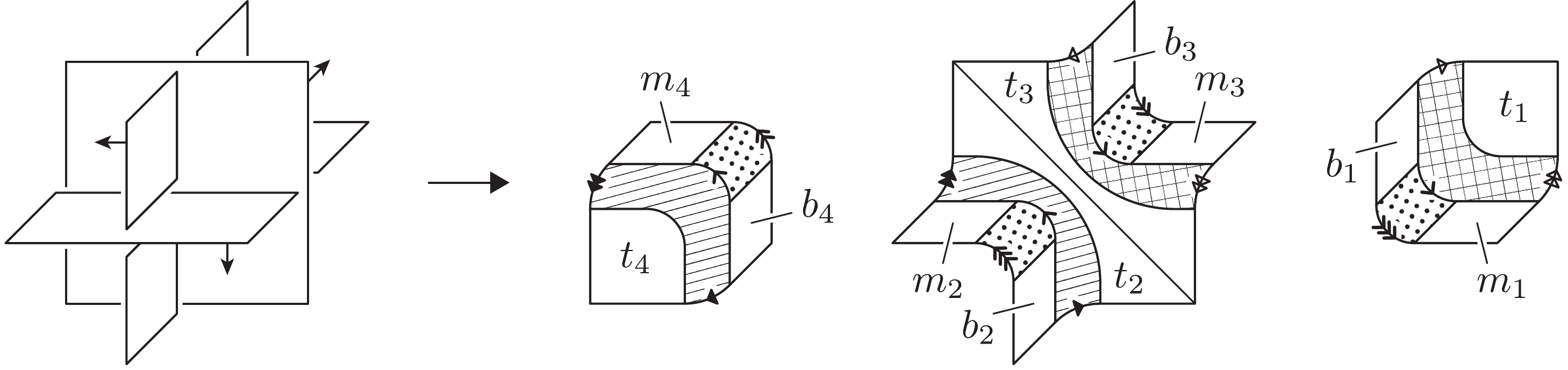}
 \caption{
  By smoothing, each triple point of $D$ is decomposed into three pieces as depicted.
  We note that, in the generalized Seifert algorithm, the hatched and checked areas are subsequently glued (via 1-handles) so that the arrows assigned to boundary edges match.
  After that, the dotted areas are glued in the same manner.
 }
 \label{fig:smoothing_at_triple_point}
\end{figure}
\end{proof}

\section*{Acknowledgments}
The author was supported by JSPS KAKENHI Grant Number JP25K07014.
He thanks Professor Masahico Saito for pointing out a historical inaccuracy in an earlier version of this paper.

\bibliographystyle{amsplain}

\end{document}